\documentclass[12pt]{article}
\usepackage{amsmath,amssymb,amsthm,latexsym}
\usepackage{hyperref}

\usepackage{graphicx}
\usepackage{xcolor}
\usepackage{geometry}
\newtheorem{theorem}{Theorem}[section]

\newtheorem{corollary}{Corollary}[section]
\newtheorem{proposition}{Proposition}[section]

\newtheorem{remark}{Remark}[section]

\newtheorem{example}{Example}[section]
\newtheorem{thm}{Theorem}

\title{\textbf{Two classes of Willmore Surfaces in $\mathbb{S}^2\times \mathbb{S}^2$}}
\author {Xiaoling Chai,~Shimpei Kobayashi,~Changping Wang,~Zhenxiao Xie\\}
\date{}
\begin{document}
\maketitle
\begin{abstract}We establish two classification theorems for Willmore surfaces in $\mathbb{S}^2 \times \mathbb{S}^2$. Firstly, we prove that a Willmore surface which is also minimal must be either  a special complex curve given by a slice or a diagonal; or, a minimal surface in a totally geodesic submanifold $\mathbb{S}^2 \times \mathbb{S}^1$ described by a solution of the sinh-Gordon equation in one variable. Secondly, we demonstrate that a Willmore surface is of product type if and only if it is the product of an elastic curve in $\mathbb{S}^2$ and a great circle. 
\end{abstract}

\indent{\bf Keywords:} Willmore surfaces; minimal surfaces; elastic curves

\indent{\bf MSC(2020):\hspace{2mm} 53A30, 53A10, 53C42} 

\section{Introduction}
For a closed surface $x:\Sigma\rightarrow (N^n,g)$ in a Riemannian manifold, the squared $L^2$-norm of the trace-free second fundamental form is a fundamental conformal invariant; that is, it is preserved under conformal changes of the metric. This functional, referred to as the conformally invariant Willmore functional \cite{Michelat,Mondino}, will henceforth be called simply the Willmore functional. The Willmore functional is equivalent, up to a topological invariant, to the integral   
$$\int_{\Sigma}(|\vec{H}|^2+K_{1212})dA,$$
where $\vec{H}$ is the mean curvature vector and $K_{1212}$ denotes the sectional curvature of $(N^n,g)$ restricted to the tangent plane of the surface. In what follows, we will denote this integral by $\mathcal{W}$ and also refer to it as the Willmore functional. This functional exhibits notable links with other fundamental quantities, including, among others, the renormalized area functional within the AdS/CFT correspondence \cite{Alexakis, Graham}. Surfaces that satisfy the Euler-Lagrange equation of this functional are called {\em Willmore surfaces}. 

When the ambient space is a real space form, both the Willmore functional and Willmore surfaces have been extensively investigated. Notable advances include the resolution of the Willmore conjecture in $\mathbb{S}^3$, originally proposed by Willmore in \cite{Willmore} and proved by Marques and Neves in \cite{Marques}.   
Regarding the construction and classification of Willmore surfaces, we refer to \cite{Bryant, Ejiri, WD, MWW} and references  therein. A basic fact is that every minimal surface in a real space form is  automatically Willmore. 

When the ambient space is a non-space form Riemannian manifold, the Euler–Lagrange equation of the Willmore functional has been derived independently by several groups of geometers from different viewpoints; see, for example, \cite{Hu-Li, Mondino, Pedit, Wang-Xie}. Owing to the presence of ambient curvature terms in the Euler–Lagrange equation, not every minimal surface is Willmore in a non-space form Riemannian manifold. 
In \cite{Montiel-Urbano}, Montiel and Urbano proved that the only surfaces in $\mathbb{CP}^2$ that are both minimal and Willmore are the superminimal surfaces of positive spin, i.e., complex curves and minimal Lagrangian surfaces. Recently, the last two authors of  this paper have generalized this result to the self-dual K\"ahler-Einstein surfaces (see Proposition 2.3 in \cite{Wang-Xie}). Note that $\mathbb{S}^2 \times \mathbb{S}^2$ (equipped with the standard product metric and complex structure), being neither self-dual nor anti-self-dual, is another canonical example of a K\"ahler-Einstein surface. This motivates the study of which minimal surfaces in $\mathbb{S}^2\times\mathbb{S}^2$ are Willmore. We refer to a surface as minimal-Willmore if it is both minimal and Willmore.
\begin{thm}\label{thm1}
   Let \( x: \Sigma \rightarrow \mathbb{S}^2 \times \mathbb{S}^2 \) be a minimal-Willmore surface. Then either \( x \) is a special complex curve given by a slice or a diagonal; or, up to an isometry, is contained in a totally geodesic submanifold $\mathbb{S}^2 \times \mathbb{S}^1$, and can be described by a solution of the sinh-Gordon equation in one variable.
\end{thm}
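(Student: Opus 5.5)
We will work with the Euler--Lagrange equation for $\mathcal{W}=\int_\Sigma(|\vec H|^2+K_{1212})\,dA$ in a general Riemannian $4$-manifold (as in \cite{Wang-Xie, Mondino}). When $\vec H=0$, every term containing $\vec H$ drops out. The Willmore condition then reduces to a first-order condition on the second fundamental form coupled with the ambient curvature: the normal projection of a contraction of $\tilde R$ with the trace-free second fundamental form $\mathring{A}$, plus a normal derivative of the curvature term, must vanish. Schematically,
\[
\sum_{i,j}\langle \tilde R(e_i,\nu)e_j,\cdot\rangle h_{ij}+(\tilde\nabla\tilde R)\text{-terms}=0 ,
\]
where $\nu$ runs over the normal bundle. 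For the product $\mathbb{S}^2\times\mathbb{S}^2$ we will use the two product projections and the complex structures $J_1=(J,J)$ and $J_2=(J,-J)$. Write a unit tangent frame as $e_k=(a_k,b_k)$ with $a_k,b_k$ tangent to the factors. Because the curvature of the product metric is parallel, only the algebraic curvature term survives. The condition then becomes an explicit algebraic relation between $h^\alpha_{ij}$ and the Kähler angles $\theta_1,\theta_2$ of the surface with respect to $J_1$ and $J_2$. Equivalently, it is a relation with the function $f=|a_1\wedge a_2|-|b_1\wedge b_2|$ measuring how the tangent plane splits between the two factors.

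Next we use the minimality of the surface. We choose a complex coordinate $z$ and express the Willmore condition via the $(2,0)$-part $\langle \tilde\nabla_{\partial_z} x_z,\cdot\rangle$. Minimality says $x_{z\bar z}$ is tangent, so the Codazzi and Ricci equations determine how $\theta_1$ and $\theta_2$ vary. We expect the equation to split into the following dichotomy: either the surface is complex for $J_1$ or $J_2$ (a Kähler angle is $0$ or $\pi$), or a certain combination such as $\sin\theta_1\,\partial\theta_1$ is forced to vanish along with a second-fundamental-form component.

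\emph{Complex case.} Assume the surface is $J_1$-holomorphic, so the two projections are holomorphic maps $\phi_1,\phi_2:\Sigma\to\mathbb{S}^2$. The residual Willmore equation should force $d\phi_1\wedge d\phi_2$ data to be degenerate. Concretely, the product $|\partial\phi_1|^2|\partial\phi_2|^2$ times a cross term must vanish, or $|\partial\phi_1|=|\partial\phi_2|$ with $\phi_2$ a fixed isometric image of $\phi_1$. Holomorphicity and unique continuation then give one of three outcomes. If $\phi_1$ is constant, we get a slice. If $\phi_2=\phi_1$ up to isometry, we get the diagonal. The mixed case must be excluded by a direct computation of the remaining term.

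\emph{Non-complex case.} Here we aim to show that the projection to one factor, say the second, has rank at most one, with image a geodesic. Then the surface lies in a totally geodesic $\mathbb{S}^2\times\mathbb{S}^1$. A minimal surface there can be written as $x=(\gamma(u,v),\, v)$-type data. We expect the Willmore condition to force the metric and Hopf data to depend on one variable only. The Gauss equation for minimal surfaces in $\mathbb{S}^2\times\mathbb{R}$-type spaces, in the form of a sinh-Gordon equation $\omega_{z\bar z}+\sinh\omega\cosh\omega=0$ (suitably normalized), then reduces to an ODE in one variable.

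The main obstacle is the non-complex case. We must extract from the algebraic Willmore relation that the normal direction built from the second factor is parallel, and that the angle function depends on one variable. We will first try differentiating the Willmore relation and feeding it into the Codazzi equation, hoping to obtain $\partial_{\bar z}$ of a holomorphic-type quantity. We would then conclude from unique continuation rather than from compactness, since $\Sigma$ is not assumed closed.
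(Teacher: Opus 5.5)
There is a genuine gap. What you have written is a plan whose decisive steps are all left as ``we expect'', ``should'' or ``hoping''. The first missing piece is that you never make the minimal-Willmore condition explicit. In the paper's frame it is the algebraic relation $f_1\bar\gamma_1^2+\bar f_2\gamma_2^2=0$ between the normal-valued Hopf data $f_1,f_2$ and the Kähler data $\gamma_1,\gamma_2$. Its force comes from pairing it with the structure equations $(C_j)_z=2ie^{-2\sigma}f_j\bar\gamma_j$, which hold when $H=0$.

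This is exactly what your complex case lacks. If $x$ is $J_1$-complex, then $\gamma_1=f_1=0$ and the condition becomes $\bar f_2\gamma_2^2=0$. Hence $f_2\bar\gamma_2=0$, so the real function $C_2$ is holomorphic and therefore constant. Then either $C_2=\pm1$, giving a slice, or $\gamma_2$ never vanishes. In the latter case $f_2\equiv 0$, and the $(f_2)_{\bar z}$ equation forces $C_2=0$, giving the diagonal. In your language, Willmore is what makes $|\partial\phi_1|^2/|\partial\phi_2|^2$ a constant $\lambda$. The ``mixed case'' $\lambda\neq 1$ then dies because $\phi_1^*g=\lambda\,\phi_2^*g$ would have curvature both $1$ and $1/\lambda$. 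Your sentence ``the residual Willmore equation should force $d\phi_1\wedge d\phi_2$ data to be degenerate'' stands in for this entire argument.

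The non-complex case, which is the heart of the theorem, contains no argument at all. The paper uses Torralbo--Urbano's description of minimal surfaces without complex points by a pair $v,w$ of sinh-Gordon solutions. In these variables Willmore becomes the first-order coupling $\cosh^3(v-w)(v+w)_{\bar z}=\cosh^3(v+w)(v-w)_z$, after rotating $z$ to absorb the phase $e^{it}$. Set $p=v+w$, $q=v-w$ and $H(s)=\int_0^s\cosh^{-3}t\,dt$. The coupling then reads $\partial_{\bar z}H(p)=\partial_z H(q)$, which separates variables: $H(p)=f(u_1)+g(u_2)$ and $H(q)=f(u_1)-g(u_2)$.

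Substituting this into the two sinh-Gordon equations, the paper eliminates $f'',g''$ in the cases $f'g'\neq0$, $f'=0$, $g'=0$. Functional independence of $p$ and $q$ then forces $p=\pm q$, i.e. $v\equiv0$ or $w\equiv0$. Only at that point does Torralbo--Urbano's Proposition 4 place $x$ in a totally geodesic $\mathbb{S}^2\times\mathbb{S}^1$. Moreover, $w\equiv0$ makes $g$ constant, so $v=v(u_1)$ solves $v''=-2\sinh 2v$; this is the source of ``one variable''.

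Your idea of differentiating the Willmore relation and feeding it into Codazzi ``hoping to obtain $\partial_{\bar z}$ of a holomorphic-type quantity'' identifies none of this. What actually appears is a separation-of-variables identity, not a holomorphic quantity. The real difficulty is the elimination argument ruling out solutions with both $v,w\not\equiv0$. Your anticipated dichotomy (complex, or $\sin\theta_1\,\partial\theta_1$ vanishing together with a second fundamental form component) is likewise unjustified: the condition is a genuine coupling between $\partial q$ and $\bar\partial p$, not the vanishing of a single term.

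Finally, you should state the analyticity step precisely. Either $C_1^2\equiv1$ or $C_2^2\equiv1$, or the non-complex points are dense and the local conclusion obtained there extends to all of $\Sigma$.
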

\noindent In other words, when the ambient space is $\mathbb{S}^2\times\mathbb{S}^2$,  within the minimal class the Willmore condition turns out to be  rigid: it forces special 
holomorphicity, or reduces the problem to a totally geodesic hypersurface. 
 Geometrically, the last class of minimal surfaces in Theorem~\ref{thm1} arises from a certain Gauss-map construction applied to a cohomogeneity {one} minimal surface in $\mathbb{S}^3\subset\mathbb{R}^4$, see Remark~\ref{rk-mini}. 

In their work \cite{Urbano}, Tobarro and Urbano gave a beautiful local characterization of minimal surfaces in $\mathbb{S}^2\times\mathbb{S}^2$ that possess no complex points. Their description is formulated in terms of a pair of solutions $v$ and $w$ to the sinh‑Gordon equation 
$$v_{z\bar z}+\frac{1}{2}\sinh (2v)=0.$$
Within this framework, the Willmore condition introduces a new partial differential equation linking $v$ and $w$, see \eqref{eq-mwill}. By analyzing this coupled system, we establish the theorem above. 

Except for seeking Willmore surfaces among minimal surfaces, another natural approach is to use 
the product structure of  $\mathbb{S}^2\times\mathbb{S}^2$ to produce Willmore surfaces. 
\begin{thm}\label{thm2}
   All Willmore surfaces of product type in $\mathbb{S}^2 \times \mathbb{S}^2$ are exhausted by products of an elastic curve in $\mathbb{S}^2$ with a great circle. 
\end{thm}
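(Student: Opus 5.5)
We parametrize a surface of product type as $x(s,t)=(\gamma_1(s),\gamma_2(t))$, where $\gamma_1\subset\mathbb{S}^2$ and $\gamma_2\subset\mathbb{S}^2$ are unit-speed curves with geodesic curvatures $\kappa_1(s)$ and $\kappa_2(t)$. The plan is to write the Willmore Euler--Lagrange equation for general ambient manifolds (as in \cite{Hu-Li, Mondino, Wang-Xie}) in these coordinates. We then separate variables to force one factor to be a great circle and the other to be elastic.

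\textbf{Step 1: geometry of the product surface.} The induced metric is flat, $ds^2+dt^2$, and $\{\partial_s,\partial_t\}$ is orthonormal. The normal bundle is spanned by $n_1=\gamma_1\times\gamma_1'$ (tangent to the first factor) and $n_2=\gamma_2\times\gamma_2'$. The second fundamental form is diagonal, with $h(\partial_s,\partial_s)=\kappa_1 n_1$, $h(\partial_t,\partial_t)=\kappa_2 n_2$ and $h(\partial_s,\partial_t)=0$. Hence
\[
\vec H=\tfrac12(\kappa_1n_1+\kappa_2n_2),
\]
and the normal connection is flat with $n_1,n_2$ parallel. The ambient curvature of $\mathbb{S}^2\times\mathbb{S}^2$ vanishes on mixed planes. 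So $K_{1212}=0$ on the tangent plane, and the curvature terms in the Euler--Lagrange equation (such as $\tilde R(e_i,\vec H)e_i$ and derivatives of ambient curvature) reduce to simple expressions: $\tilde R(\partial_s,n_1)\partial_s=-n_1$, $\tilde R(\partial_t,n_2)\partial_t=-n_2$, and the cross terms vanish.

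\textbf{Step 2: the Euler--Lagrange equation.} We use the form
\[
\Delta^\perp\vec H+\sum_{i,j}\langle A_{\vec H}e_i,e_j\rangle h(e_i,e_j)-2|\vec H|^2\vec H+(\text{curvature terms})=0
\]
and project onto $n_1$ and $n_2$. This gives two ODE-type equations. The $n_1$-component should read, schematically,
\[
\kappa_1''+\tfrac12\kappa_1^3-\tfrac12\kappa_1\kappa_2^2+c\,\kappa_1+(\text{terms in }\kappa_2)=0,
\]
and the $n_2$-component is the same with the roles of $(\kappa_1,s)$ and $(\kappa_2,t)$ exchanged. The constant $c$ comes from the curvature of $\mathbb{S}^2$.

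\textbf{Step 3: separation of variables.} In the $n_1$-equation the term $\kappa_1\kappa_2^2$ mixes the variables. Differentiating in $t$ gives $\kappa_1(s)\,(\kappa_2^2)'(t)=0$. So either $\kappa_1\equiv0$ or $\kappa_2$ is constant, and symmetrically for the other equation. We then run through the cases.
\begin{itemize}
\item If $\kappa_1\equiv0$, the $n_2$-equation becomes the elastica equation $\kappa_2''+\tfrac12\kappa_2^3+\kappa_2=0$ on $\mathbb{S}^2$ (with the correct constant fixed by the computation), so $\gamma_2$ is elastic and $\gamma_1$ is a great circle.
\item If both $\kappa_1$ and $\kappa_2$ are nonzero constants, substituting gives algebraic equations such as $\kappa_1(\tfrac12\kappa_1^2-\tfrac12\kappa_2^2+1)=0$ together with its symmetric counterpart. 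Adding them yields $\kappa_1^2-\kappa_2^2+2=0$ and $\kappa_2^2-\kappa_1^2+2=0$, which is a contradiction.
\end{itemize}
Conversely, the same equations show that every product of an elastic curve with a great circle is Willmore.

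\textbf{Main obstacle.} The delicate step is getting the exact coefficients in Step 2. This means using the correct general Euler--Lagrange equation, including the terms involving the ambient curvature and its derivatives, which vanish here since $\mathbb{S}^2\times\mathbb{S}^2$ is locally symmetric. The exclusion of the constant-curvature case depends on the signs of these constants, so I would cross-check them by computing $\mathcal W=\int|\vec H|^2\,dA=\tfrac14\int(\kappa_1^2+\kappa_2^2)\,ds\,dt$ for the product. Its variations through product surfaces should reproduce the elastic energy $\int(\kappa^2+\text{const})\,ds$ for each factor.
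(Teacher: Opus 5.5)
Your overall plan matches the paper's. You reduce the Willmore condition for $x=(\gamma_1(s),\gamma_2(t))$ to a coupled system for $\kappa_1,\kappa_2$, separate variables through the mixed term, and exclude two nonzero constant curvatures by adding the two algebraic relations (the paper's ``$4=0$''). Your Step 3 is essentially the paper's case analysis.

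The gap is Step 2, which you leave schematic even though Step 3 uses its exact form. When you differentiate the $n_1$-equation in $t$, you silently discard the placeholder ``(terms in $\kappa_2$)''. Any term there depending on $t$ alone would destroy the identity $\kappa_1(\kappa_2^2)'=0$. The constant case also needs $c\neq 0$ together with the sign pattern you wrote.

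Your proposed cross-check cannot supply this. For a product variation with normal field $\phi(s)n_1+\psi(t)n_2$, the first variation only sees $\int E_1\,dt$ and $\int E_2\,ds$, where $E_1,E_2$ are the $n_1$- and $n_2$-components of the Euler--Lagrange operator. So you would only recover the averaged equation $2\kappa_1''+\kappa_1^3+(2-\overline{\kappa_2^2})\kappa_1=0$, with $\overline{\kappa_2^2}$ the mean of $\kappa_2^2$. That fixes $c$, but it is blind to terms of zero $t$-mean (multiples of $\kappa_2''$, say), which are exactly what would break your separation of variables. It also presupposes closed factor curves.

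The missing computation is short with your Step 1 data. On a product surface $K_{1212}\equiv 0$, and its first variation vanishes too, for two reasons. First, $\tilde\nabla\tilde R=0$. Second, tilting the tangent plane changes its sectional curvature by $2\tilde R(\nabla^\perp_{e_1}V,e_2,e_2,e_1)+2\tilde R(e_1,\nabla^\perp_{e_2}V,e_2,e_1)$, which only involves mixed components of the product curvature and hence is zero. Therefore at product surfaces the Euler--Lagrange equation is that of $\int|\vec H|^2\,dA$:
\[
\Delta^\perp\vec H+\sum_{i,j}\langle h(e_i,e_j),\vec H\rangle\, h(e_i,e_j)-2|\vec H|^2\vec H+\sum_i\big(\tilde R(\vec H,e_i)e_i\big)^\perp=0 .
\]
Here $\tilde R$ is normalized so that the last term is $2\vec H$ in the unit sphere; for the product it equals $\vec H$.

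Since $n_1,n_2$ are parallel and $\vec H=\tfrac12(\kappa_1 n_1+\kappa_2 n_2)$, the $n_1$-component is $\tfrac14\big(2\kappa_1''+\kappa_1^3-\kappa_1\kappa_2^2+2\kappa_1\big)$, and symmetrically for $n_2$. So $c=1$ and there are no further terms. This is precisely the system the paper obtains by inserting the fundamental data \eqref{eq-C1C2}, \eqref{eq-f1f2H} into \eqref{eq-Willmore}. With it established, your Step 3 goes through unchanged.
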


Note that both of these two classes of examples yield Willmore surfaces that are non-linearly full in $\mathbb{S}^2 \times \mathbb{S}^2$. No linearly full examples are currently known. We point out that constructing explicit examples, or even addressing the existence problem, in non-space-form Riemannian manifolds is particularly challenging; see the efforts in \cite{Modino,Ikoma,Michelat} and the references therein. 

The paper is organized as follows. {Section~\ref{sec2}  begins with the  preliminaries on  $\mathbb{S}^2\times \mathbb{S}^2$ and on surfaces within it. In Section~\ref{sec3}, we first derive the fundamental equations for surfaces in $\mathbb{S}^2\times \mathbb{S}^2$, and subsequently compute the corresponding Willmore equation. Section~\ref{sec4} is devoted to the classification of minimal-Willmore surfaces in $\mathbb{S}^2\times\mathbb{S}^2$. Finally, in Section~\ref{sec5}, we classify Willmore surfaces of product type. 


\section{Preliminaries}\label{sec2}
Let $\mathbb{S}^2$ be the standard 2-sphere, $J$ its complex structure, and $\omega$ its Kähler 2-form, defined by $\omega(\cdot, \cdot) = \langle J \cdot, \cdot \rangle$.

We endow $\mathbb{S}^2\times \mathbb{S}^2$ with the product metric, and also denote it by $\left\langle\;,\;\right\rangle$. On $\mathbb{S}^2\times \mathbb{S}^2$ there are two  complex structures:
\[J_1=(J,J),\quad J_2=(J,-J).\]
Through the natural projection maps $\pi_j$, $j=1,2$ from $\mathbb{S}^2\times \mathbb{S}^2$ to $\mathbb{S}^2$,  the Kähler 2-forms $\omega_j$ ( $j=1,2$), associated with the complex structures $J_j$ are given by:
\[\omega_1=\pi^\ast_1\omega+\pi^\ast_2\omega,\quad \omega_2=\pi^\ast_1\omega-\pi^\ast_2\omega.\]
In this paper, we choose $\pi^\ast_1\omega\wedge\pi^\ast_2\omega$ as the orientation of  $\mathbb{S}^2\times \mathbb{S}^2$.

It is well known that the product manifold $\mathbb{S}^2\times \mathbb{S}^2$ is Einstein  with scalar curvature $4$. Using the complex structures $J_1$ and $J_2$, one can write down the curvature tensor ${R}$ of  $\mathbb{S}^2\times \mathbb{S}^2$ as follows, 
{\small
\begin{equation}\label{R}
	{R}(X,Y,Z,W)=-\frac{1}{2}\big(\langle X,W\rangle\langle Y,Z\rangle-\langle X,Z\rangle\langle Y,W\rangle\nonumber+\langle J_1X,J_2W\rangle\langle J_1Y,J_2Z\rangle-\langle J_1X,J_2Z\rangle\langle J_1Y,J_2W\rangle\big). 
    \end{equation}
}

Let $x:\Sigma\rightarrow \mathbb{S}^2\times \mathbb{S}^2$ be an immersion of an orientable surface $\Sigma$. Considering the two Kähler structures on $\mathbb{S}^2\times \mathbb{S}^2$, one can define two Kähler functions $C_1,C_2:\Sigma\rightarrow\mathbb{R}$ as below,
\[x^\ast\omega_j=C_j\omega_\Sigma,\quad j=1,2,\]
where $\omega_\Sigma$ is the area 2-form of $\Sigma$. Note that $C^2_j\leq1$. 
We say the immersion $x$ is complex (with respect to $J_1$ or $J_2$), if $C^2_1=1$ or  $C^2_2=1$. We say the immersion $x$ is Lagrangian (with respect to $J_1$ or $J_2$), if $C_1=0$ or $C_2=0$. Following \cite{Urbano}, a point $p \in \Sigma$ is called \emph{non-complex} provided that $C_1^2(p) < 1$ and $C_2^2(p) < 1$.   

Associated to the immersion $x$, there are two natural maps 
$x_j\triangleq \pi_j\circ x:\Sigma\rightarrow\mathbb{S}^2$, by which $x=(x_1,x_2)$. 
If the surface $\Sigma$ is compact and oriented, 
then we have 
\begin{align}
	\int_\Sigma C_1\,dA=4\pi(d_1+d_2),\quad\int_\Sigma C_2\,dA=4\pi(d_1-d_2),\label{C_12}
\end{align}
where $d_j$ is the degree of $x_j$. 
Consequently, if $x$ is a complex immersion, then its area is $A=4\pi m,m\in\mathbb{N}$; if $x$ is Lagrangian, then we have $|d_1|=|d_2|$.

	Along the immersion $x$, we choose a local oriented orthonormal frame $\{e_1, e_2, e_3, e_4\}$ for $\mathbb{S}^2 \times \mathbb{S}^2$, such that $\{e_1, e_2\}$ is a tangent frame and $\{e_3, e_4\}$ is a normal frame to $x$. For  the immersion $x$, we denote by $K$ the {Gauss} curvature, $B$ the second fundamental form, $\vec{H}$ the mean curvature vector, $K^\perp = R^\perp(e_1, e_2, e_3, e_4)$ the normal curvature, where $R^\perp$ represents the normal curvature tensor. The Gauss, Coddazzi, and Ricci equations are given by 
	\begin{align}
    &K=\frac{1}{2}(C^2_1+C^2_2)+2|\vec{H}|^2-\frac{|B|^2}{2},\label{eq-K}\\
    &(\nabla B)(X,Y,Z)-(\nabla B)(Y,X,Z)=\frac{1}{2}\big(\left\langle J_1X,J_2Z\right\rangle(J_2J_1Y)^\perp-\left\langle J_1Y,J_2Z\right\rangle(J_2J_1X)^\perp\big),\\
    &K^\perp=\frac{1}{2}(C^2_1-C^2_2)+\langle[A_{e_4},A_{e_3}]e_1,e_2\rangle.
    \end{align}
    where $(\:)^\perp$ denotes the normal component relative to the immersion $x$, and $A_\eta$ represents the shape operator with respect to the normal vector $\eta$. 

\section{The fundamental equations of surfaces in $\mathbb{S}^2\times \mathbb{S}^2$}\label{sec3}

In \cite{Urbano}, the authors established the fundamental equations for minimal surfaces in $\mathbb{S}^2\times\mathbb{S}^2$. In this section we first generalize these equations to arbitrary surfaces, and then employ them to derive the Euler–Lagrange equation of the Willmore functional.    

Let $x= (x_1, x_2):\Sigma \rightarrow \mathbb{S}^2\times \mathbb{S}^2$ be an immersion of a surface $\Sigma$. We choose $z=u+i\,v$ as a local isothermal coordinate and write the induced metric $g$ of $x$ as 
$$g=e^{2\sigma} |dz|^2=e^{2\sigma} (du^2+dv^2).$$
Let $\{e_3, e_4\}$ be an orthonormal frame of the normal bundle such that $\{x_u, x_v, e_3, e_4\}$ forms an oriented frame of $x^* T(\mathbb{S}^2 \times \mathbb{S}^2)$.

Define $\xi \triangleq \frac{1}{\sqrt{2}}(e_4 - i e_3)$. Then $|\xi|^2 = 1$, $\langle \xi, \xi \rangle = 0$, so $\{\xi, \bar{\xi}\}$ is a local orthonormal frame in the complexified normal bundle. According to the given orientation, there exists smooth complex functions $\gamma_1$ and $\gamma_2$ such that 
\begin{align}
&J_1 x_z = i C_1 x_z + \gamma_1 \xi, \label{J1xz} \\
&J_1 \xi = -2 e^{-2\sigma} \bar{\gamma}_1 x_z - i C_1 \xi, \label{J1xi} \\
&J_2 x_z = i C_2 x_z + \gamma_2 \bar{\xi}, \label{J2xz} \\
&J_2 \xi = -2 e^{-2\sigma} \gamma_2 x_{\bar{z}} + i C_2 \xi. \label{J2xi}
\end{align}
It is easy to see that 
\begin{equation}\label{eq-GC}
2|\gamma_j|^2 = {e^{2\sigma}(1 - C_j^2)},~~~1\leq j\leq 2.
\end{equation}

Set $\hat{x} \triangleq (x_1, -x_2)$, then $\{x, \hat{x}, x_z, x_{\bar{z}}, \xi, \bar{\xi}\}$ forms a moving frame along the immersion $x: \Sigma \rightarrow \mathbb{S}^2\times \mathbb{S}^2\rightarrow \mathbb{R}^6$. 
Note that $\hat{x}=-J_1J_2 x$, using \eqref{J1xz} $\sim$ \eqref{J2xi}, we obtain the structure equation of this moving frame as follows, 
\begin{align}
&\hat{x}_z = C_1 C_2 x_z + 2 e^{-2\sigma} \gamma_1 \gamma_2 x_{\bar{z}} - i C_2 \gamma_1 \xi - i C_1 \gamma_2 \bar{\xi}, \label{eq-xh}\\
&x_{zz} = 2 \sigma_z x_z + f_1 \xi + f_2 \bar{\xi} - \frac{\gamma_1 \gamma_2}{2} \hat{x}, \label{F1} \\
&x_{z\bar{z}} = i \frac{\sqrt{2}}{4} e^{2\sigma} \bar H \xi - i\frac{\sqrt{2}}{4} e^{2\sigma} {H} \bar{\xi} - \frac{1}{4} e^{2\sigma} x - \frac{1}{4} e^{2\sigma} C_1 C_2 \hat{x}, \label{F2} \\
&\xi_z = i\frac{\sqrt{2}}{2} {H} x_z - 2 e^{-2\sigma} f_2 x_{\bar{z}} + \rho \xi + \frac{i C_1 \gamma_2}{2} \hat{x}, \label{F3} \\
&\xi_{\bar{z}} = -2 e^{-2\sigma} \bar{f}_1 x_z + i\frac{\sqrt{2}}{2} {H} x_{\bar{z}} - \bar{\rho} \xi - \frac{i C_2 \bar{\gamma}_1}{2} \hat{x}, \label{F4}
\end{align}
where $\rho$ characterizes the normal connection, $i(\bar{H}\xi-{H}\bar{\xi})/\sqrt{2}$ is the mean curvature vector, and 
$(f_1\xi+f_2\bar{\xi})dz^2$ represents the normal bundle valued Hopf form of $x$.  As in \cite{Urbano}, we call 
\[
G \stackrel{\triangle}{=} \{\sigma, \rho, H, f_1, f_2, C_1, C_2, \gamma_1, \gamma_2\} 
\]
the fundamental data of $(x, \xi)$. 

Performing a direct calculation by differentiating \eqref{J1xz}$\sim$\eqref{F4} with respect to $z$ and $\bar{z}$, we obtain the following integrable equations: 
\begin{align}
&(C_1)_z = 2 i e^{-2\sigma} f_1 \bar{\gamma}_1 - \frac{\sqrt{2}}{2}  \gamma_1 {H}, \label{eq-C1z}\\
&(C_2)_z = 2 i e^{-2\sigma} f_2 \bar{\gamma}_2 + \frac{\sqrt{2}}{2}  \gamma_2 \bar{H}, \label{eq-C2z}\\
&(\gamma_1)_{\bar{z}} = \bar{\rho} \gamma_1 + \frac{\sqrt{2}}{2} e^{2\sigma} C_1 \bar{H}, \label{eq-g1z}\\
&(\gamma_2)_{\bar{z}} = -\bar{\rho} \gamma_2 - \frac{\sqrt{2}}{2}  e^{2\sigma} C_2 {H}, \label{eq-g2z}\\
&(f_1)_{\bar{z}} =   \bar{\rho}f_1 + \frac{i\sqrt{2} e^{2\sigma}}{4}\big(\bar{H}_{z}+\rho \bar H \big)+ \frac{1}{4} i e^{2\sigma} C_1 \gamma_1, \label{eq-f1z}\\
&(f_2)_{\bar{z}} = - \bar{\rho}f_2 - \frac{i\sqrt{2} e^{2\sigma}}{4}\big({H}_{z}-\rho H \big)+\frac{1}{4} i e^{2\sigma} C_2 \gamma_2,\label{eq-f2z}\\
&|f_j|^2 = \frac{1}{8} e^{4\sigma} \left( |H|^2 + C_j^2 - K + (-1)^j K^\perp \right), ~~1\leq j\leq 2. \label{eq-fj}
\end{align}

\begin{remark}
    In \cite{Urbano}, the authors showed that if $x$ is complex with respect to $J_1$ (resp. $J_2$) then 
    $C_1^2=1$ and $\gamma_1=f_1=0$ (resp. $C_2^2=1$ and $\gamma_2=f_2=0$).
\end{remark}
\begin{remark}\label{rk-mini2}
    The functions $f_1$ and $f_2$ are exactly the local conformal invariants $i\Psi/\sqrt{2}$ and $-i\Phi/\sqrt{2}$ introduced in \cite{Wang-Xie}. It follows from \eqref{eq-fj} that the Willmore functional $\mathcal{W}(x)$, the functionals $\mathcal{W}^+(x)$ and $\mathcal{W}^-(x)$ (defined by Montiel and Urbano in \cite{Montiel-Urbano}) of the immersion $x$ can be expressed as follows, 
$$\mathcal{W}(x)=\int_{\Sigma}(2|\vec{H}|^2+C_1^2+C_2^2)\,dA_x-4\pi\,\chi(\Sigma),$$
$$\mathcal{W}^+(x)=\int_{\Sigma}(|\vec{H}|^2+C_1^2)\,dA_x,~~~\mathcal{W}^-(x)=\int_{\Sigma}(|\vec{H}|^2+C_2^2)\,dA_x.$$
Consequently, in $\mathbb{S}^2\times\mathbb{S}^2$, complex curves and minimal Lagrangian surfaces with respect to $J_1$ (resp. $J_2$) are minimizers of the conformal invariant functional $\mathcal{W}^+$ (resp. $\mathcal{W}^-$).  
\end{remark}

For closed surfaces in a conformal 4-manifold, the last two authors of this paper calculated the first variation for the Willmore functional in \cite{Wang-Xie}. According to the results in that paper, the Euler-Lagrange equation of the Willmore functional in $\mathbb{S}^2\times \mathbb{S}^2$ is given by 
\begin{multline} \label{eq-Willmore}
2\big[ (f_1)_{\bar{z}\bar{z}} - 2(\sigma_{\bar{z}} + \bar{\rho}) (f_1)_{\bar{z}} + (\bar{\rho}^2 + 2\sigma_{\bar{z}}\bar{\rho} - \bar{\rho}_{\bar{z}}) f_1 \big] 
+ 2\big[ (\bar{f}_2)_{zz} - 2(\sigma_z - \rho) (\bar{f}_2)_z + (\rho^2 - 2\sigma_z\rho + \rho_z) \bar{f}_2 \big] \\
+ \gamma_1^2 \bar{f}_1 + \bar{\gamma_2}^2f_2 
+ i\sqrt{2}\big[ (|f_1|^2+|f_2|^2) \bar{H} - 2 f_1\bar{f}_2 H\big]=0
\end{multline}
We call a surface {\em Willmore} if it satisfies the above Euler-Lagrange equation. In terms of the mean curvature vector, this equation can also be expressed as 
\begin{equation}\label{eq-WH}
\begin{split}
\!\!\!\!\!\bar{H}_{z\bar{z}}+\bar{H}_{\bar{z}z}+2(\rho \bar H_{\bar{z}}- \bar{\rho} \bar H_z) +\!\! \left(\! \frac{3e^{2\sigma}}{4} \big(C_1^2+C_2^2-\frac23\big) - 2|\rho|^2 + \rho_{\bar{z}} - \bar\rho_{{z}}+2 e^{-2\sigma}\big(|f_1|^2+|f_2|^2\big) \!\right)\! \bar{H} \\
-4 e^{-2\sigma} f_1 \bar{f}_2{H} -2\sqrt{2}i e^{-2\sigma} \big(\bar{f}_1 \gamma_1^2+f_2\bar{\gamma_2}^2\big)=0
\end{split}
\end{equation}

Similarly, a surface is a critical point of the functional $\mathcal{W}^+$ if and only if
\[
2\big[ (f_1)_{\bar{z}\bar{z}} - 2(\sigma_{\bar{z}} + \bar{\rho}) (f_1)_{\bar{z}} + (\bar{\rho}^2 + 2\sigma_{\bar{z}}\bar{\rho} - \bar{\rho}_{\bar{z}}) f_1 \big] + \gamma_1^2 \bar{f}_1 -i\sqrt{2} \big( H \bar{f}_2 f_1 - \bar{H} |f_1|^2 \big)=0,
\]
or equivalently, 
\begin{multline}\label{eq-W+}
\bar{H}_{z\bar{z}} - \bar{\rho} \bar{H}_z + \rho \bar{H}_{\bar{z}} + \left( \frac{1}{2} e^{2\sigma} C_1^2 - \frac{1}{2} |\gamma_1|^2 - |\rho|^2 + \rho_{\bar{z}} + 2 e^{-2\sigma} |f_1|^2 \right) \bar{H} - 2 e^{-2\sigma} f_1\bar{f}_2 H 
- 2\sqrt{2}ie^{-2\sigma} \bar{f}_1 \gamma_1^2 = 0
\end{multline}
Likewise, it is a critical point of $\mathcal{W}^-$ if and only if
\[
2\big[ (\bar{f}_2)_{zz} - 2(\sigma_z - \rho) (\bar{f}_2)_z + (\rho^2 - 2\sigma_z\rho + \rho_z) \bar{f}_2 \big] + \bar{\gamma_2^2} f_2-i\sqrt{2} \big( H f_1 \bar{f}_2 - \bar{H} |f_2|^2\big)=0,
\]
or equivalently, 
\begin{multline}
\bar{H}_{\bar{z}z} + {\rho} \bar{H}_{\bar z} - \bar\rho \bar H_{{z}} + \left(\frac12 e^{2\sigma} C_2^2- \frac12 |\gamma_2|^2 - |\rho|^2 -\bar\rho_{{z}} + 2e^{-2\sigma} |f_2|^2  \right) \bar{H} - 2e^{-2\sigma} {f}_1\bar{f_2}{H} - 2\sqrt{2}i e^{-2\sigma} {f}_2 \bar{\gamma_2}^2 = 0
\end{multline}
\begin{proposition}
    Let $x:\Sigma \rightarrow \mathbb{S}^2\times \mathbb{S}^2$ be a minimal surface. Then $x$ is a critical surface of the functional $\mathcal{W}^+$ (resp. $\mathcal{W}^-$) if and only if it is complex or Lagrangian with respect to the complex structure $J_1$ (resp. $J_2$). 
\end{proposition}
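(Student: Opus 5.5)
For a minimal surface we have $H\equiv0$, so equation \eqref{eq-W+} collapses to $\bar{f}_1\gamma_1^2=0$. The whole proof is therefore about this pointwise identity. By symmetry it suffices to treat $\mathcal{W}^+$; the $\mathcal{W}^-$ case is identical, with $f_2,\gamma_2,C_2$ in place of $f_1,\gamma_1,C_1$, the reduced equation becoming $f_2\bar{\gamma}_2^2=0$.

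\emph{Sufficiency.} If $x$ is complex with respect to $J_1$, then $\gamma_1=f_1=0$ by the remark following the fundamental equations, so \eqref{eq-W+} holds. If $x$ is Lagrangian with respect to $J_1$, then $C_1\equiv 0$. Equation \eqref{eq-C1z} with $H=0$ gives $0=(C_1)_z=2ie^{-2\sigma}f_1\bar{\gamma}_1$. By \eqref{eq-GC} we have $|\gamma_1|^2=e^{2\sigma}/2\neq 0$, so $f_1=0$ and \eqref{eq-W+} holds.

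\emph{Necessity.} Assume $f_1\gamma_1^2\equiv0$ (after conjugation), and let $U=\{p:\gamma_1(p)\neq 0\}=\{C_1^2<1\}$, which is open.
\begin{itemize}
\item On $U$ we get $f_1=0$. Then \eqref{eq-C1z} with $H=0$ gives $(C_1)_z=0$. Since $C_1$ is real, $C_1$ is constant on each component of $U$.
\item On the interior of $\Sigma\setminus U$ we have $C_1^2=1$.
\item So the real-analytic function $C_1$ (the surface is minimal, hence analytic) is locally constant on the dense open set $U\cup\operatorname{int}(\Sigma\setminus U)$. Therefore $C_1$ is constant on connected $\Sigma$.
\end{itemize}
Analyticity here is only a convenience; continuity plus density also suffices, since a continuous function with $dC_1=0$ on a dense open set has $dC_1=0$ everywhere, hence is constant.

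Remaining case: $C_1\equiv c$ with $c^2<1$, $c\neq 0$; we must show this is impossible. Now $f_1\equiv 0$ and $\gamma_1$ never vanishes. Use \eqref{eq-f1z} with $H=0$ and $f_1=0$: it gives $0=\tfrac14 i e^{2\sigma}C_1\gamma_1$. Hence $C_1\gamma_1=0$, so $c=0$. This shows $C_1\equiv0$ (Lagrangian) or $C_1^2\equiv1$ (complex).

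The main subtlety is the gluing argument showing that $C_1$ is globally constant, since it needs care at boundary points of $U$. The key observation is the use of the Codazzi-type equation \eqref{eq-f1z}, whose term $C_1\gamma_1$ forces $c=0$.
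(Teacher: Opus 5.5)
Your proof is correct and is essentially the paper's argument. You reduce to $\bar f_1\gamma_1^2=0$, get constancy of $C_1$ from \eqref{eq-C1z}, and, when $C_1^2<1$, use \eqref{eq-GC} to obtain $f_1\equiv0$ and \eqref{eq-f1z} to force $C_1=0$. The paper's proof writes this with index $2$, but your indexing is the one consistent with \eqref{eq-W+}. You also supply the sufficiency direction, which the paper leaves implicit.

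The gluing over $U$ is unnecessary, though. $\bar f_1\gamma_1^2=0$ already gives $f_1\bar\gamma_1=0$ at every point of $\Sigma$, so \eqref{eq-C1z} yields $(C_1)_z\equiv0$ globally and $C_1$ is constant at once; this is how the paper proceeds.
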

\begin{proof}
    It follows from \eqref{eq-W+} that $x$ is a critical surface of $\mathcal{W}^+$ if and only if $\bar{f_2}\gamma_2^2 = 0$. By \eqref{eq-C2z}, we deduce that $C_2$ is a holomorphic function on $\Sigma$ and hence is constant. 
    
    If $C_2^2 \equiv 1$, then $x$ is a complex curve. Otherwise, from \eqref{eq-GC} and \eqref{eq-g2z}, we conclude that $\gamma_2$ is nowhere vanishing, which forces $f_2 \equiv 0$. Then \eqref{eq-f2z} implies $C_2 \equiv 0$, i.e., $x$ is a minimal Lagrangian surface.
    
    A similar argument applies to the case of $\mathcal{W}^-$.
\end{proof}
\begin{remark}
    This proposition implies that, among minimal surfaces in $\mathbb{S}^2 \times \mathbb{S}^2$, the critical surfaces of the conformal invariant functionals $\mathcal{W}^+$ and $\mathcal{W}^-$ are in fact minimizers. A natural question is whether this conclusion extends to the full Willmore functional $\mathcal{W}$.  It follows from \eqref{eq-WH} that minimal-Willmore surfaces are characterized by the equation 
\begin{equation}\label{eq-mW}
f_1 \bar{\gamma}_1^2+\bar{f}_2 \gamma_2^2=0.\end{equation}
\end{remark}
\section{Classification of minimal-Willmore surfaces in $\mathbb{S}^2\times \mathbb{S}^2$}\label{sec4}

In this section, we first discuss several well-known examples of minimal surfaces, and then establish the classification of minimal-Willmore surfaces in  $\mathbb{S}^2\times\mathbb{S}^2$.  

\begin{example}\label{ex-slice} For any point $p\in\mathbb{S}^2$, the corresponding slices 
\[
\mathbb{S}^2\times\{p\}=\{(x,p)\in\mathbb{S}^2\times\mathbb{S}^2 \mid x\in\mathbb{S}^2\},
\]
\[
\{p\}\times\mathbb{S}^2=\{(p,x)\in\mathbb{S}^2\times\mathbb{S}^2 \mid x\in\mathbb{S}^2\}.
\]
are totally geodesic with the first one satisfying $C_1=C_2=1$ and the second one satisfying $C_1=-C_2=1$. That is, the slices are complex with respect to both $J_1$ and $J_2$.
\end{example}
\begin{example}\label{ex-diagonal}  The diagonal 
$$D\triangleq\{(x,x)\in\mathbb{S}^2\times\mathbb{S}^2 \mid x\in\mathbb{S}^2\}$$
is totally geodesic 
and satisfies $C_1=1$ and $C_2=0$, which means that $D$ is complex with respect to $J_1$ and Lagrangian with respect to $J_2$.
\end{example}
\begin{example}\label{ex-Clifford}
    The torus defined by 
    $$T\triangleq\{(x,y)\in\mathbb{S}^2\times\mathbb{S}^2 \mid x_1=y_1=0\}$$ 
    is totally geodesic and satisfies $C_1=C_2=0$, which means that $T$ is Lagrangian for both $J_1$ and $J_2$. This surface is known as the Clifford torus in $\mathbb{S}^2 \times \mathbb{S}^2$, characterized by the flatness of both its tangent and normal bundles.  
\end{example}
\begin{example} 
Let $\Sigma=\mathbb{C}/\Lambda$ be a torus generated by the lattice $\Lambda=\{m+n\tau \mid m,n\in\mathbb{Z}\}$, where $\tau$ is a complex number with $\operatorname{Im}\tau>0$. Let $\wp:\Sigma\rightarrow\mathbb{S}^2$ be the Weierstrass $\wp$-function with a second-order pole at the origin, and $z_0\in\Sigma$ be a point at which $\wp$ is not ramified. 
Then $x=\big(\wp(z),\wp(z-z_0)\big):\Sigma\rightarrow(\mathbb{S}^2\times\mathbb{S}^2)$ is a holomorphic embedding with the complex structure $J_1$. Such minimal tori are referred to as Weierstrass tori.
\end{example}

By \cite[Proposition 3]{Urbano}, the first three examples exhaust all totally geodesic surfaces in $\mathbb{S}^2\times\mathbb{S}^2$, which are trivially Willmore. Among these, the diagonal is the only full surface. In this paper, we will prove that it is, in fact, the only full Willmore surface among all minimal surfaces in $\mathbb{S}^2\times\mathbb{S}^2$.

We first illustrate that the Weierstrass tori are not Willmore by a simple global argument. Note that $C_1^2 \equiv 1$ implies $\gamma_1 \equiv 0$. From \eqref{eq-C2z} and the condition $H \equiv 0$, it follows that $\gamma_2$ satisfies a linear partial differential equation. Using this and \eqref{eq-GC}, together with 
$$\int_\Sigma C_2  dA = 4\pi(d_1-d_2)=0,$$ 
we deduce that $\gamma_2$ is nowhere vanishing; here we have used the fact both $\wp(z)$ and $\wp(z-z_0)$ have mapping degree 2.  If a Weierstrass torus is minimal-Willmore, then \eqref{eq-mW} implies  $f_2\equiv0$. Combining this with \eqref{eq-fj}, we obtain the following  contradiction,  
$$\int_\Sigma C_2^2  dA = \int_\Sigma K  dA-\int_\Sigma K^\perp  dA  = -2\pi\chi^\perp = -16\pi,$$
where the last equality follows from Proposition 2.4 in \cite{Castro-Urbano}. 

\begin{proposition}\label{pro-mw}
    Let $x:\Sigma \rightarrow \mathbb{S}^2\times \mathbb{S}^2$ be a complex curve or minimal Lagrangian surface. Then $x$ is Willmore if and only if it is one of the surfaces given in Example~\ref{ex-slice} $\sim$ Example~\ref{ex-Clifford}. 
\end{proposition}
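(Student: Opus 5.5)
We will work entirely with the minimal-Willmore condition \eqref{eq-mW}, $f_1\bar\gamma_1^2+\bar f_2\gamma_2^2=0$, together with the integrability equations \eqref{eq-C1z}$\sim$\eqref{eq-f2z} specialized to $H\equiv0$. The goal is to show that in every case both $f_1$ and $f_2$ vanish identically. Since $H\equiv 0$ and $(f_1\xi+f_2\bar\xi)dz^2$ is the full Hopf form, this makes $x$ totally geodesic. By \cite[Proposition 3]{Urbano}, $x$ is then one of Examples~\ref{ex-slice}$\sim$\ref{ex-Clifford}. The converse is immediate: these examples have $f_1=f_2=0$ and $H=0$, so \eqref{eq-mW} holds trivially.

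\emph{Complex case.} Suppose $x$ is complex with respect to $J_1$; the $J_2$ case is symmetric. By the Remark following the integrability equations, $C_1^2=1$ and $\gamma_1=f_1=0$, so \eqref{eq-mW} reduces to $\bar f_2\gamma_2^2=0$. With $H=0$, \eqref{eq-g2z} reads $(\gamma_2)_{\bar z}=-\bar\rho\gamma_2$. This is a linear $\bar\partial$-type equation, so by the standard Carleman/similarity principle either $\gamma_2\equiv0$ or its zeros are isolated.
\begin{itemize}
\item If $\gamma_2\equiv0$, then \eqref{eq-GC} gives $C_2^2=1$. Thus $x$ is complex for $J_2$ as well, and the same Remark gives $f_2=0$.
\item Otherwise $f_2=0$ on a dense open set, hence $f_2\equiv0$.
\end{itemize}
In either case $f_1=f_2=0$. (In the second case, \eqref{eq-f2z} additionally gives $\tfrac14 ie^{2\sigma}C_2\gamma_2=0$, so $C_2\equiv0$; this is consistent with the outcome being the diagonal.)

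\emph{Lagrangian case.} Suppose $x$ is minimal Lagrangian with respect to $J_1$, i.e.\ $C_1\equiv0$; the $J_2$ case is symmetric, using \eqref{eq-C2z}. By \eqref{eq-GC}, $2|\gamma_1|^2=e^{2\sigma}$, so $\gamma_1$ is nowhere zero. Then \eqref{eq-C1z} with $H=0$ gives $0=2ie^{-2\sigma}f_1\bar\gamma_1$, hence $f_1\equiv0$. Condition \eqref{eq-mW} again becomes $\bar f_2\gamma_2^2=0$, and the dichotomy from the complex case applies verbatim. Either $\gamma_2\equiv0$, in which case $x$ is complex for $J_2$ and $f_2=0$; or $f_2\equiv0$, and \eqref{eq-f2z} forces $C_2=0$. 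In both sub-cases $f_1=f_2=0$.

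The only delicate point is the unique-continuation step: deducing that $\gamma_2$ either vanishes identically or has isolated zeros from $(\gamma_2)_{\bar z}=-\bar\rho\gamma_2$. The standard device is to write $\gamma_2=e^{-\phi}h$, where $\phi_{\bar z}=\bar\rho$ is solved locally. Then $h_{\bar z}=0$, so $h$ is holomorphic, and its zeros are isolated unless $h\equiv0$. Everything else is direct substitution into the structure equations.
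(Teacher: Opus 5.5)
Your proof is correct, but after the common first step it takes a different route from the paper's.

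\textbf{Common step.} Both arguments use $f_1\bar\gamma_1=0$ to reduce \eqref{eq-mW} to $\bar f_2\gamma_2^2=0$. This comes from the Remark in the complex case and from \eqref{eq-C1z} in the Lagrangian case.

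\textbf{The paper's route.} The paper notes that $\bar f_2\gamma_2^2=0$ already gives $\bar f_2\gamma_2=0$ pointwise. By \eqref{eq-C2z} with $H=0$, the real function $C_2$ is then holomorphic, hence constant. It then relies on \cite[Proposition 3]{Urbano} (see also \cite{Chen}) to turn constancy of $C_1$ and $C_2$ into the conclusion. (The paper's text writes $\bar f_1\gamma_2=0$; $\bar f_2\gamma_2=0$ is what is meant.)

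\textbf{Your route.} You prove $f_1=f_2=0$ directly, using \eqref{eq-g2z} and a similarity-principle argument for $\gamma_2$. From \cite{Urbano} you then need only the classification of totally geodesic surfaces.

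\textbf{What each buys.} The paper's argument is shorter. It needs no case split and no unique continuation; its only global input is that a real holomorphic function on a connected surface is constant. Yours is more self-contained: it essentially reproves the passage from constant K\"ahler functions to total geodesicity. It also shows which example arises in each case, e.g.\ $C_2\equiv0$ when $\gamma_2\not\equiv0$, via \eqref{eq-f2z}.

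\textbf{A simplification.} Your unique-continuation step can be dropped. On $\{\gamma_2\neq0\}$ you have $f_2=0$ from $\bar f_2\gamma_2^2=0$. On the interior of $\{\gamma_2=0\}$ you have $f_2=0$ from the Remark, whose proof is local. The union of these two open sets is dense, so $f_2\equiv0$ by continuity.
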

\begin{proof}
Without loss of generality, we assume that $x$ is complex or minimal Lagrangian with respect to the complex structure $J_1$. Consequently, $C_1$ is a constant which must be either $0$ or $\pm 1$. It follows from \cite[Proposition 3]{Urbano} (see also \cite{Chen}) that we only need to prove that $C_2$ is a constant. 

By \eqref{eq-C1z}, we have $f_1\bar{\gamma_1}=0$. Substituting this into \eqref{eq-mW} yields $\bar{f_1}{\gamma_2}=0$, which, by \eqref{eq-C2z}, implies that $C_2$ is holomorphic. Since $C_2$ is real-valued, it must be constant. 
\end{proof}
\begin{theorem}\label{thm-mw}
    Let $x:\Sigma \rightarrow \mathbb{S}^2\times \mathbb{S}^2$ be a minimal surface without complex points. Then $x$ is Willmore if and only if up to an isometry it is a minimal-Willmore surface in $\mathbb{S}^2\times\mathbb{S}^1$. 
\end{theorem}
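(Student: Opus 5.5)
The plan is to reduce the Willmore condition to a first-order relation between the Kähler functions, and then show that this relation forces $C_1=\pm C_2$, which is exactly the condition for lying in some $\mathbb{S}^2\times\mathbb{S}^1$. The converse direction should be a direct check: for a minimal surface in $\mathbb{S}^2\times\mathbb{S}^1$ one verifies \eqref{eq-mW} directly. Indeed, if $x_2$ maps into a great circle, then $x_2^*\omega=0$, so $C_1=C_2$. The frame relations \eqref{J1xz}, \eqref{J2xz} then tie $\gamma_1$ to $\gamma_2$, and $f_1$ to $\bar f_2$, in a way that should make $f_1\bar\gamma_1^2+\bar f_2\gamma_2^2$ vanish identically.

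For the forward direction, assume $H\equiv0$ and that there are no complex points. By \eqref{eq-GC}, $\gamma_1$ and $\gamma_2$ are then nowhere zero. With $H=0$, \eqref{eq-C1z} and \eqref{eq-C2z} give
\[
f_1\bar\gamma_1=-\tfrac{i}{2}e^{2\sigma}(C_1)_z,\qquad \bar f_2\gamma_2=\tfrac{i}{2}e^{2\sigma}(C_2)_{\bar z}.
\]
Hence the minimal-Willmore equation \eqref{eq-mW} becomes
\[
\bar\gamma_1\,(C_1)_z=\gamma_2\,(C_2)_{\bar z}.
\]
Next I use that $\langle x_{zz},\hat x\rangle\,dz^2$ is a multiple of $\gamma_1\gamma_2\,dz^2$ by \eqref{F1}. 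By \eqref{eq-g1z} and \eqref{eq-g2z} with $H=0$, the $\rho$-terms cancel and this quadratic differential is holomorphic. Since it is nowhere zero, we may choose the coordinate $z$ so that $\gamma_1\gamma_2$ is a nonzero constant. Then \eqref{eq-GC} gives $e^{4\sigma}(1-C_1^2)(1-C_2^2)=\text{const}$. Following Torralbo--Urbano, I write $C_1=\tanh$ and $C_2=\tanh$ of combinations of the two sinh-Gordon solutions $v,w$, which expresses $e^{2\sigma}$, $|\gamma_j|$ and the phases of the $\gamma_j$ through $v,w$ and one further real function.

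Substituting into $\bar\gamma_1(C_1)_z=\gamma_2(C_2)_{\bar z}$ gives a first-order complex relation between $v$ and $w$; this is the paper's \eqref{eq-mwill}. Its modulus yields
\[
\sqrt{1-C_1^2}\,|dC_1|=\sqrt{1-C_2^2}\,|dC_2|,
\]
and its argument ties the direction of $dC_1$ to the reflection of the direction of $dC_2$. I then differentiate this relation once more (in $\bar z$), compare with the two sinh-Gordon equations and the compatibility equations \eqref{eq-f1z} and \eqref{eq-f2z}, and aim to obtain an algebraic identity forcing $C_1^2=C_2^2$. This would follow, for instance, if the mixed second derivatives together with the nonlinear terms $\sinh(2v)$ and $\sinh(2w)$ can only balance when $v$ and $w$ agree up to sign or translation. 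This is the main obstacle. I expect some case analysis: the generic case should give a contradiction unless $C_1=\pm C_2$, while degenerate cases, such as one of $C_j$ being constant, are handled separately via the same argument as in the proof of Proposition \ref{pro-mw}.

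Finally, once $C_1=C_2$ (after possibly applying the isometry that flips the orientation of one factor to turn $C_1=-C_2$ into $C_1=C_2$), we have $x_2^*\omega=0$, so $x_2$ has rank at most one. Since $x$ is minimal in a product, $x_2$ is a harmonic map into $\mathbb{S}^2$. A harmonic map of rank at most one into $\mathbb{S}^2$ has image in a geodesic, that is, a great circle. Up to an isometry, $x$ therefore lies in the totally geodesic $\mathbb{S}^2\times\mathbb{S}^1$ and is minimal-Willmore there.
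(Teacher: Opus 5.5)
\textbf{There is a genuine gap: the step that proves the theorem is missing.} Your reduction is correct. With $H=0$ and no complex points, \eqref{eq-C1z} and \eqref{eq-C2z} turn \eqref{eq-mW} into $\bar\gamma_1(C_1)_z=\gamma_2(C_2)_{\bar z}$. In the Torralbo--Urbano data ($C_1=\tanh q$, $C_2=\tanh p$, $p=v+w$, $q=v-w$) this is exactly \eqref{eq-mwill}; after a rotation of $z$ absorbing $e^{it}$ it reads $p_{\bar z}/\cosh^3p=q_z/\cosh^3q$.

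The missing step is showing that this first-order relation, together with the two sinh-Gordon equations, forces $C_1=\pm C_2$. You only say you ``aim to obtain'' an algebraic identity after one more differentiation and ``expect'' a case analysis. The route you sketch also cannot supply new information. With $f_1=-i\gamma_1q_z$ and $f_2=-i\gamma_2p_z$, the compatibility equations \eqref{eq-f1z} and \eqref{eq-f2z} are literally the sinh-Gordon equations \eqref{eq-ql} and \eqref{eq-pl}.

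The idea you need is to \emph{integrate} the relation before differentiating. Put $\mathcal{H}(s)=\int_0^s\cosh^{-3}t\,dt$; then the relation reads $\partial_{\bar z}\mathcal{H}(p)=\partial_z\mathcal{H}(q)$. Since $\mathcal{H}(p)$ and $\mathcal{H}(q)$ are real, taking real and imaginary parts gives $\mathcal{H}(p)=f(u_1)+g(u_2)$ and $\mathcal{H}(q)=f(u_1)-g(u_2)$. Your ``reflection of directions'' remark is the pointwise shadow of this, but you never integrate it. Substituting $p_{u_1}=f'\cosh^3p$, $p_{u_2}=g'\cosh^3p$, $q_{u_1}=f'\cosh^3q$, $q_{u_2}=-g'\cosh^3q$ into the sinh-Gordon equations gives \eqref{eq-pqs}. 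This is a system linear in $f''$ and $g''$ whose coefficients depend only on $p,q$.

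Assuming $p\neq\pm q$, one must then treat three cases: $f'=g'=0$; $f'g'\neq0$; and exactly one of $f',g'$ vanishing. In each case you solve the system, differentiate in the other variable, and use either the functional independence of $p$ and $q$ or $dp/dq=-\cosh^3p/\cosh^3q$. This forces $p$ or $q$ to be constant, which is a contradiction. None of this appears in your proposal, so the forward implication is unproved.

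\textbf{Your converse claim is false.} A minimal surface lying in $\mathbb{S}^2\times\mathbb{S}^1$ need not satisfy \eqref{eq-mW}. With $C_1=C_2=C$ you get $|\gamma_1|=|\gamma_2|$. In your normalized coordinate, $\bar\gamma_1/\gamma_2=\overline{\gamma_1\gamma_2}/|\gamma_2|^2$ has constant phase. So $\bar\gamma_1C_z=\gamma_2C_{\bar z}=\gamma_2\overline{C_z}$ forces $\nabla C$ to point in a fixed direction. In the data with $w=0$ it reads $e^{it}v_{\bar z}=v_z$: $v$ must be a one-variable solution of sinh-Gordon, as in Remark~\ref{rk-mini}. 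Any solution $v$ (with $w=0$) that is not a function of a single linear coordinate gives a minimal, non-Willmore surface in $\mathbb{S}^2\times\mathbb{S}^1$.

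The ``if'' direction of the theorem is tautological, since ``minimal-Willmore surface in $\mathbb{S}^2\times\mathbb{S}^1$'' already contains the Willmore condition. No computation is needed, and the one you propose would ``prove'' something untrue. Your final step is fine: once $C_1=C_2$, $x_2$ is a harmonic map of rank at most one, so it maps into a great circle. This is a legitimate substitute for citing Proposition~4 of Torralbo--Urbano.
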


To prove this theorem, we need the following characterization of minimal surfaces in $\mathbb{S}^2\times\mathbb{S}^2$ by solutions of the \textup{sinh-Gordon} equation described in Section 4 of \cite{Urbano}. 

 Let $x:\Sigma \rightarrow \mathbb{S}^2\times \mathbb{S}^2$ be a minimal immersion of a simply connected surface without complex points and $z$ be a complex coordinate such that the Hopf differential $\Theta(z)\triangleq\frac{\langle J_1 x_z, J_2 x_z\rangle}{2}dz^2$ (which is holomorphic) satisfies $\Theta(z)=dz^2$. 
 Then the functions 
 $$v\triangleq\frac{1}{2}\ln \sqrt{\frac{(1+C_1)(1+C_2)}{(1-C_1)(1-C_2)}},~~~w\triangleq\frac{1}{2}\ln \sqrt{\frac{(1-C_1)(1+C_2)}{(1+C_1)(1-C_2)}}$$
 satisfy the sinh-Gordon equation, i.e., 
 $$v_{z\bar z}+\frac{1}{2}\sinh (2v)=0,~~~w_{z\bar z}+\frac{1}{2}\sinh (2w)=0.$$
 Conversely, given two solutions $v, w:\mathbb{C}\rightarrow \mathbb{R}$ of the sinh-Gordon equation, one can construct a 1-parameter family of minimal immersions $X_t: \mathbb{S}^2 \times \mathbb{S}^2$ by taking the following quantities as the fundamental data, 
$$\sigma=\frac{1}{2}\ln\big(4\cosh(v+w)\cosh(v-w)\big),~~~ \rho=\left(\ln \sqrt{\frac{\cosh(v+w)}{\cosh(v-w)}}\right)_{\!\!z},~~~$$
$$C_1=\tanh(v-w),~~~C_2=\tanh(v+w),$$
$$\gamma_1=\sqrt{2}e^{\frac{it}{2}}\sqrt{\frac{\cosh(v+w)}{\cosh(v-w)}},~~~\gamma_2=\sqrt{2}e^{\frac{it}{2}}\sqrt{\frac{\cosh(v-w)}{\cosh(v+w)}},~~~f_1=-i \gamma_1 (v-w)_z,~~~f_2=-i\gamma_2(v+w)_z.$$
Geometrically, this construction originates from the Gauss maps of two minimal surfaces in \( \mathbb{S}^3 \), whose induced metrics are \( e^{2v} |dz|^2 \) and \( e^{2w} |dw|^2 \), respectively, and which share the same Hopf differential \( \theta(z) = \frac{i}{2} dz^2 \). For further details, we refer to Section~5 of \cite{Urbano}.

\begin{proposition}
    The minimal immersion $X_t: \mathbb{C} \rightarrow \mathbb{S}^2 \times \mathbb{S}^2$ is Willmore if and only if 
    \begin{equation}\label{eq-mwill}
    e^{it}\cosh^3(v-w)(v+w)_{\bar{z}} = \cosh^3(v+w)(v-w)_z.
    \end{equation}
\end{proposition}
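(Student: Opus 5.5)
We start from the characterization \eqref{eq-mW}: since $X_t$ is minimal ($H\equiv 0$), equation \eqref{eq-WH} reduces to $f_1\bar\gamma_1^2+\bar f_2\gamma_2^2=0$. So it suffices to substitute the fundamental data of $X_t$ listed above into \eqref{eq-mW} and simplify.

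Write $a\triangleq\cosh(v+w)$ and $b\triangleq\cosh(v-w)$, so that
\[
\gamma_1=\sqrt2 e^{\frac{it}{2}}\sqrt{a/b},\qquad \gamma_2=\sqrt2 e^{\frac{it}{2}}\sqrt{b/a}.
\]
Since $v$ and $w$ are real, $a,b>0$. Using $f_1=-i\gamma_1(v-w)_z$ we get
\[
f_1\bar\gamma_1^2=-i\,\gamma_1|\gamma_1|^2(v-w)_z=-i\,\gamma_1\cdot\tfrac{2a}{b}\,(v-w)_z .
\]
Using $\bar f_2=i\bar\gamma_2(v+w)_{\bar z}$ we get
\[
\bar f_2\gamma_2^2=i\,\gamma_2|\gamma_2|^2(v+w)_{\bar z}=i\,\gamma_2\cdot\tfrac{2b}{a}\,(v+w)_{\bar z}.
\]

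Now factor out the common phase $e^{\frac{it}{2}}$, bearing in mind that $\bar\gamma_1^2$ and $\gamma_2^2$ carry opposite phases. Concretely, $\bar\gamma_1^2=2e^{-it}a/b$ and $\gamma_1=\sqrt2e^{\frac{it}{2}}\sqrt{a/b}$. Therefore
\[
f_1\bar\gamma_1^2=-i\,2\sqrt2\,e^{-\frac{it}{2}}(a/b)^{3/2}(v-w)_z .
\]
For the second term, $\gamma_2^2=2e^{it}b/a$ and $\bar\gamma_2=\sqrt2e^{-\frac{it}{2}}\sqrt{b/a}$, which gives
\[
\bar f_2\gamma_2^2=i\,2\sqrt2\,e^{\frac{it}{2}}(b/a)^{3/2}(v+w)_{\bar z}.
\]
Setting the sum to zero and multiplying by $e^{\frac{it}{2}}(ab)^{3/2}/(2\sqrt2\,i)$ yields
\[
e^{it}b^3(v+w)_{\bar z}=a^3(v-w)_z,
\]
which is exactly \eqref{eq-mwill}. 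Every step is an equivalence, since we only multiplied by nonvanishing factors.

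The main obstacle is purely bookkeeping: tracking which conjugates appear, and hence whether the phase comes out as $e^{it}$ or $e^{-it}$ and on which side. I will double-check this against the definitions \eqref{J1xz}, \eqref{J2xz} and against the conjugation convention in \eqref{eq-mW} as derived from \eqref{eq-WH}, specifically the term $\bar f_1\gamma_1^2+f_2\bar\gamma_2^2$, which is the complex conjugate of \eqref{eq-mW}. Conjugating the final identity gives an equivalent statement, so the result does not depend on this choice.
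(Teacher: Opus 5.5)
Your proposal is correct and is exactly the paper's argument: substitute the fundamental data of $X_t$ into \eqref{eq-mW} and multiply through by a nonvanishing factor. One slip to fix: since $\gamma_1\bar\gamma_1^2=|\gamma_1|^2\bar\gamma_1$, your first display should read $f_1\bar\gamma_1^2=-i\,\bar\gamma_1|\gamma_1|^2(v-w)_z$ rather than $-i\,\gamma_1|\gamma_1|^2(v-w)_z$; your subsequent explicit formula with phase $e^{-\frac{it}{2}}$ already uses the correct expression, so the final identity is right.
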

\begin{proof} The conclusion follows directly by substituting the above fundamental data into \eqref{eq-mW}.
\end{proof}

\begin{proof}[{Proof of Theorem~4.1}]
Note that, due to the analyticity of minimal surfaces, it suffices to prove the theorem locally. 
We may therefore assume that $\Sigma = \mathbb{C}$ with a complex coordinate $z = u_1 + i u_2$, so that the immersion $x$ can be described by a pair of solutions $v$ and $w$ of the sinh-Gordon equation. The  minimal-Willmore condition on $x$ is then equivalent to the following system of partial differential equations, 
    \begin{align}
       &v_{z\bar z}=-\frac{1}{2}\sinh (2v), \label{eq-vl}\\
       &w_{z\bar z}=-\frac{1}{2}\sinh (2w), \label{eq-wl}\\
       &
       \cosh^3(v-w)(v+w)_{\bar{z}} = \cosh^3(v+w)(v-w)_z. \label{eq-vw}
    \end{align}

    Set $p \triangleq v + w$ and $q \triangleq v - w$, then the above three equations become 
    \begin{align}
       &p_{z\bar z}=-\sinh (p)\cosh(q),\label{eq-pl}\\
       &q_{z\bar z}=-\sinh (q)\cosh(p),\label{eq-ql}\\
       &
       \frac{p_{\bar z}}{\cosh^3(p)} = \frac{q_{z}}{\cosh^3(q)}.\label{eq-pq} 
    \end{align}
    
    By introducing a real function $H(s)\triangleq\int_{0}^s\frac{dt}{\cosh^3t}$, we obtain that $\eqref{eq-pq}$ is equivalent to 
    \begin{equation}
    \frac{\partial}{\partial \bar{z}} H(p)=\frac{\partial}{\partial {z}} H(q), 
    \end{equation}
    which implies that 
    $$ \frac{\partial}{\partial u_1} \big(H(p)-H(q)\big)=\frac{\partial}{\partial u_2} \big(H(p)+H(q)\big)=0.$$ Therefore, there exists real functions $f(u_1)$ and $g(u_2)$ such that 
    $$H(p)=f(u_1)+g(u_2),~~~H(q)=f(u_1)-g(u_2).$$ 
    By taking partial derivatives to both hand sides of the above two equations, we have 
   \begin{alignat}{2}
&p_{u_1} = \frac{f'(u_1)}{H'(p)} = f'(u_1)\cosh^3(p), \quad\quad\quad\quad\quad  && p_{u_2} = \frac{g'(u_2)}{H'(p)} = g'(u_2)\cosh^3(p), \label{eq-p1}\\
&q_{u_1} = \frac{f'(u_1)}{H'(q)} = f'(u_1)\cosh^3(q), \quad\quad\quad\quad\quad && q_{u_2} = -\frac{g'(u_2)}{H'(q)} = -g'(u_2)\cosh^3(q),\label{eq-q1}\\
&p_{u_1 u_2} = 3p_{u_1} p_{u_2} \tanh(p), \quad\quad\quad\quad\quad && q_{u_1 u_2} = 3q_{u_1} q_{u_2} \tanh(q).\label{eq-pq12}
\end{alignat}

    It follows from \eqref{eq-p1} and \eqref{eq-q1} that 
    $$p_{u_1u_1} + p_{u_2u_2} = \big( f''(u_1) + g''(u_2) \big) \cosh^3(p) + 3 \big( f'(u_1)^2 + g'(u_2)^2 \big) \cosh^5(p) \sinh(p),$$
    $$q_{u_1u_1} + q_{u_2u_2} = \big( f''(u_1) - g''(u_2) \big) \cosh^3(q) + 3 \big( f'(u_1)^2 + g'(u_2)^2 \big) \cosh^5(q) \sinh(q).$$
    Combining these with \eqref{eq-pl} and \eqref{eq-ql}, we obtain 
    \begin{equation}\label{eq-pqs}
    \begin{split}
       \big(f''(u_1)+g''(u_2)\big)\cosh^3(p) + 3 \big( f'(u_1)^2 + g'(u_2)^2 \big) \cosh^5(p) \sinh(p)=-4\sinh(p)\cosh(q), \\
        \big(f''(u_1)- g''(u_2)\big)\cosh^3(q) + 3 \big( f'(u_1)^2 + g'(u_2)^2 \big) \cosh^5(q) \sinh(q)=-4\sinh(q)\cosh(p).
        \end{split}
    \end{equation}
    
   \textbf{Claim.} \emph{There holds $p = \pm q$.}
    \vskip 0.2cm
    To prove this claim, we argue by contradiction. Assume, to the contrary, that $p \neq \pm q$. 

    If $f'(u_1)=g'(u_2)=0$, then \eqref{eq-p1} and \eqref{eq-q1} implies that both $p$ and $q$ are constant. It follows from \eqref{eq-pl} and \eqref{eq-ql} that 
    $p=q=0.$ The contradiction with the hypothesis $p \neq \pm q$ is thus established. 

    For the case $f'(u_1)g'(u_2)\neq0$, we first regard \eqref{eq-pqs} as the following system of linear equations, 
     \begin{equation}\label{eq-fgl1}
    \begin{split}
       \!\!\!\!\! f''(u_1)\cosh^3(p) + 3 \big( f'(u_1)^2 + g'(u_2)^2 \big) \cosh^5(p) \sinh(p)=-4\sinh(p)\cosh(q)- g''(u_2)\cosh^3(p), \\
        \!\!\!\!\! f''(u_1) \cosh^3(q) + 3 \big( f'(u_1)^2 + g'(u_2)^2 \big) \cosh^5(q) \sinh(q)=-4\sinh(q)\cosh(p)+g''(u_2)\cosh^3(q).
        \end{split}
    \end{equation}
    Its coefficient matrix has a non-vanishing determinant given by   $$3\cosh^3(p)\cosh^3(q)\big(\cosh^2(q) \sinh(q)-\cosh^2(p) \sinh(p)\big),$$
    under the hypothesis that $p \neq q$. 
    So we can solve from \eqref{eq-fgl1} that 
    \begin{equation}\label{eq-f''}
    f''(u_1)=A+B\,g''(u_2),
    \end{equation}
    with 
    $$A=A(p,q)=-4 \frac{\sinh(p)\sinh(q)}{\cosh^3(p)\cosh^3(q)} \cdot \frac{\cosh^6(q) - \cosh^6(p)}{\cosh^2(q)\sinh(q) - \cosh^2(p)\sinh(p)},$$
    and 
    $$B=B(p,q)=-\frac{\cosh^2(q)\sinh(q) + \cosh^2(p)\sinh(p)}{\cosh^2(q)\sinh(q) - \cosh^2(p)\sinh(p)}.$$
    Differentiating \eqref{eq-f''} with respect to $u_2$, by \eqref{eq-p1} and  \eqref{eq-q1}, we have 
    \begin{equation*}\label{eq-g'''}
    0=\big(A_p \cosh^3(p)-A_q\cosh^3(q)\big)g'(u_2)+\big(B_p \cosh^3(p)-B_q\cosh^3(q)\big)g'(u_2)g''(u_2)+Bg'''(u_2), 
    \end{equation*}
    which implies that 
    $$\frac{g'''(u_2)}{g'(u_2)}=\widetilde{A}+\widetilde{B}\,g''(u_2),$$
    where $\widetilde{A}=\widetilde{A}(p,q)$ and $\widetilde{B}=\widetilde{B}(p,q)$ are rational functions of $\{\cosh(p), \sinh(p), \cosh(q), \sinh(q)\}$. 
    Differentiating this equation with respect to $u_1$, we derive that 
    $$0=\big(\widetilde{A}_p \cosh^3(p)+\widetilde{A}_q\cosh^3(q)\big)f'(u_1)+\big(\widetilde{B}_p \cosh^3(p)+\widetilde{B}_q\cosh^3(q)\big)f'(u_1)g''(u_2).$$
    It follows that $g''(u_2)=C(p,q)$ is a rational function of $\{\cosh(p), \sinh(p), \cosh(q), \sinh(q)\}$. By taking the derivative of $g''(u_2)$ with respect to $u_1$, we obtain that 
    \begin{equation}
    \label{eq-Cpq}
    C_p(p,q)\cosh^3(p)+C_q(p,q)\cosh^3(q)=0.
    \end{equation}
    Since $C_p(p,q)$ and $C_q(p,q)$ are also rational functions of $\{\cosh(p), \sinh(p), \cosh(q), \sinh(q)\}$, there exists a polynomial $Q(y_1, y_2, y_3, y_4)$ such that 
    $$Q\big(\cosh(p), \sinh(p), \cosh(q), \sinh(q)\big)=0.$$
    Expand the polynomial $Q\big(\cosh(p), \sinh(p), \cosh(q), \sinh(q)\big)$ in a series of $\sinh(np)$ and $\cosh(mp)$, with coefficients given by polynomials in $\sinh(q)$ and $\cosh(q)$. Since $p$ and $q$ are independent functions by \eqref{eq-p1} and \eqref{eq-q1}, all coefficients in this series must vanish. This leads to the conclusion that $p$ is constant. Therefore,  $f'(u_1)=0$, which is a contradiction.

    If $f'(u_1)=0, g'(u_2)\neq0$, then we regard \eqref{eq-pqs} as the following system of linear equations, 
    \begin{equation}\label{eq-fgl}
    \begin{split}
        3  g'(u_2)^2 \cosh^5(p) \sinh(p)+g''(u_2)\cosh^3(p)=-4\sinh(p)\cosh(q), \\
        3 g'(u_2)^2 \cosh^5(q) \sinh(q)-g''(u_2)\cosh^3(q)=-4\sinh(q)\cosh(p).
        \end{split}
    \end{equation}
    Its coefficient matrix also has a non-vanishing determinant given by   $$-3\cosh^3(p)\cosh^3(q)\big(\cosh^2(q) \sinh(q)+\cosh^2(p) \sinh(p)\big)$$
    under the hypothesis $p\neq \pm q$. So we can solve from \eqref{eq-fgl} that both $g'(u_2)$ and $g''(u_2)$ are rational functions of $\{\cosh(p), \sinh(p),\cosh(q), \sinh(q)\}$. Hence there also exists a polynomial $Q(y_1, y_2, y_3, y_4)$ such that 
    $$Q\big(\cosh(p), \sinh(p), \cosh(q), \sinh(q)\big)=0.$$ In this case, it follows from \eqref{eq-p1} and \eqref{eq-q1} that $p$ can be treated as a function of $q$ satisfying 
\[
\frac{dp}{dq} = -\frac{\cosh^3 p}{\cosh^3 q}.
\]
Consequently, regarding $Q\big(\cosh(p), \sinh(p), \cosh(q), \sinh(q)\big)$ as a function of $q$ alone, we deduce that $q$ is a constant. This implies $g'(u_2) = 0$, leading to a contradiction.

    A similar argument applies to the case $f'(u_1) \neq 0$ and $g'(u_2) = 0$, again yielding a contradiction.

    Therefore, we establish the claim that $p=\pm q$, which means either $v=0$ or $w=0$. It then follows from Proposition 4 in \cite{Urbano} that $x$ is non-full in $\mathbb{S}^2 \times \mathbb{S}^2$. This means it lies in a totally geodesic hypersurface of the ambient space, which, up to an isometry, is an open subset of $\mathbb{S}^2 \times \mathbb{S}^1$.    
\end{proof}
\begin{remark}\label{rk-mini}If $w=0$, then it follows from \eqref{eq-vw} that $v$ only depends on the variable $u_1$, and satisfies 
\begin{equation}\label{eq-ellip}
\frac{d^2 v}{d u_1 ^2}=-2\sinh(2v).
\end{equation}
The solution to this ordinary differential equation can be expressed using an elliptic function, 
\[
v = \operatorname{arcsinh}\Bigl( k \; \mathrm{sn}\bigl( 2u_1 + \delta,\; i k 
\bigr) \Bigr),
\]
where $\delta$ and $k>0$ are integration constants, and $\mathrm{sn}(\,\cdot,\,ik)$ denotes the Jacobian elliptic sine function with  modulus $i k$.  
Applying the classical Jacobi imaginary modulus transformation, the solution also admits the equivalent form 
\[
v = \operatorname{arcsinh}\Bigl( \frac{k}{\sqrt{1+k^2}} \; \mathrm{sn}\bigl( \sqrt{1+k^2}\,(2u_1 + \delta),\; \frac{k}{\sqrt{1+k^2}} \bigr) \Bigr).\]

Geometrically, given a solution of~\eqref{eq-ellip}, one first obtains a minimal surface $\psi:\mathbb{C}\rightarrow \mathbb{S}^3$ with induced metric $e^{2v}|dz|^2$ 
and Hopf differential $\frac i 2 dz\otimes dz$. 
Such a surface should be necessarily homogeneous or of cohomogeneity one and hence belongs to the $T_{m,k,a}$ family constructed and classified by Hsiang and Lawson in \cite{Hsiang-Lawson}. This family contains infinitely many closed minimal surfaces, whose topology is either a torus or a Klein bottle. Denote by $N$ the Gauss map of such a surface. Then the map \emph{(}see the proof of Corollary $1$ in \cite{Urbano}\emph{)}
\[
\phi(z)=(V_\psi, e^{2i u_2}):\mathbb{C}\rightarrow \mathbb{S}^2\times\mathbb{S}^1
\]
defines a minimal surface in $\mathbb{S}^2\times\mathbb{S}^1$, where
\[
V_\psi=\frac{1}{\sqrt{2}}(-2ie^{-2v}\psi_{z}\wedge\psi_{\overline{z}}+\psi\wedge N):\mathbb{C}\rightarrow \mathbb{S}^2\subset \Lambda^2\mathbb{R}^4. 
\]
Apart from the obvious slices, these surfaces constitute all minimal-Willmore surfaces in $\mathbb{S}^2\times\mathbb{S}^1$. 
\end{remark}

The classification stated in Theorem~\ref{thm1} (see the introduction) follows from Proposition~\ref{pro-mw} and Theorem~\ref{thm-mw}, together with the analyticity of minimal surfaces.

\section{Willmore surfaces of product type in $\mathbb{S}^2\times \mathbb{S}^2$}\label{sec5}
In this section, we present a classification of all Willmore surfaces of product type in $\mathbb{S}^2\times \mathbb{S}^2$. 

Consider a product surface $x = \big(x_1(u_1), x_2(u_2)\big)$ in $\mathbb{S}^2 \times \mathbb{S}^2$, with $u_1$ and $u_2$ as the arc-length parameters of $x_1$ and $x_2$, and with unit normals $n_1(u_1)=J x_1'(u_1)$ and $n_2(u_2)=- J x_2'(u_2)$, respectively. It is obvious that the tangent bundle and the normal bundle of $x$ are both flat  (i.e., $\sigma=\rho=0$). Set
$$ z\triangleq u_1+iu_2,~~~
 ~~~\xi\triangleq\frac{1}{\sqrt{2}}\big(-i n_1(u_1), n_2(u_2)\big).
$$
Then $\{x_z, x_{\bar z}, \xi, \bar{\xi}\}$ 
forms a moving frame along $x$. Direct calculation yields the fundamental data as follows, 
\begin{equation}\label{eq-C1C2}C_1=C_2=0,~~~\gamma_1=\frac{i}{\sqrt{2}},~~~\gamma_2=\frac{-i}{\sqrt{2}},~~~H=-2\sqrt{2}i{f_1}=-2\sqrt{2}i\bar{f_2},
\end{equation}
\begin{equation}\label{eq-f1f2H}
f_1=\frac{1}{4\sqrt{2}}\big(i{k}_1(u_1)-{k}_2(u_2)\big),~~~f_2=\frac{1}{4\sqrt{2}}\big(-i{k}_1(u_1)-{k}_2(u_2)\big),
\end{equation}
where $k_1(u_1)$ and $k_2(u_2)$ are the geodesic curvature of $x_1$ and $x_2$ respectively. 
\begin{theorem}
    The products of an elastic  curve in $\mathbb{S}^2$ and a great circle exhaust all Willmore surfaces of product type in $\mathbb{S}^2\times\mathbb{S}^2$. 
\end{theorem}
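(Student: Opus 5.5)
Since $\sigma=\rho=0$ and the fundamental data are given explicitly by \eqref{eq-C1C2} and \eqref{eq-f1f2H}, the plan is to substitute them into the Euler--Lagrange equation \eqref{eq-Willmore}, which then becomes a PDE in $k_1(u_1)$ and $k_2(u_2)$ alone, and to separate variables. With $\sigma=\rho=0$, \eqref{eq-Willmore} reduces to
\[
2(f_1)_{\bar z\bar z}+2(\bar f_2)_{zz}+\gamma_1^2\bar f_1+\bar\gamma_2^2 f_2+i\sqrt2\big[(|f_1|^2+|f_2|^2)\bar H-2f_1\bar f_2H\big]=0 .
\]
We have $\gamma_1^2=\bar\gamma_2^2=-\tfrac12$, $|f_1|^2=|f_2|^2=\tfrac{1}{32}(k_1^2+k_2^2)$, and $\bar f_2=\tfrac{1}{4\sqrt2}(ik_1-k_2)=f_1$, so $H=-2\sqrt2 i f_1$. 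Also $\partial_z=\tfrac12(\partial_{u_1}-i\partial_{u_2})$ acts on functions of one variable in a simple way. Hence the cubic term becomes a multiple of $(k_1^2+k_2^2)f_1$ together with a term in $f_1^2\bar f_1$. Collecting everything, the equation should split into a real part and an imaginary part, roughly of the form
\[
k_1''+\tfrac12 k_1^3+c\,k_1k_2^2+a k_1=0,\qquad k_2''+\tfrac12 k_2^3+c\,k_2k_1^2+a k_2=0,
\]
for explicit constants $a,c$ (with signs to be determined). The first involves $k_1$ with derivatives in $u_1$; the second involves $k_2$ with derivatives in $u_2$.

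The key step, and the main obstacle, is the separation of variables in these mixed terms. In the first equation, $k_1''+\tfrac12k_1^3+ak_1$ depends only on $u_1$, while $c\,k_1k_2^2$ is mixed. Differentiating in $u_2$ gives $c\,k_1(k_2^2)'=0$. If $c\neq0$ (which I expect), then either $k_1\equiv0$ or $k_2$ is constant, and by symmetry either $k_2\equiv0$ or $k_1$ is constant. The cases to treat are therefore:
\begin{itemize}
\item[(i)] $k_2\equiv0$: the first equation becomes the elastic curve equation in $\mathbb{S}^2$, namely $k_1''+\tfrac12k_1^3+ak_1=0$, which I expect to match $2k''+k^3+(2\cdot\text{curv})k=0$ with $\lambda=0$ once the constants are checked. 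The second equation is then trivial, giving exactly an elastic curve times a great circle.
\item[(ii)] $k_1\equiv0$: the symmetric case.
\item[(iii)] both $k_1$ and $k_2$ constant and nonzero: the equations become algebraic, $\tfrac12k_1^2+ck_2^2+a=0$ and $\tfrac12k_2^2+ck_1^2+a=0$. Subtracting gives $(\tfrac12-c)(k_1^2-k_2^2)=0$, so $k_1^2=k_2^2$ unless $c=\tfrac12$. One then solves for $k_1^2=-a/(\tfrac12+c)$ and checks its sign; I expect it to be negative, ruling this case out. Otherwise the constant circle solution is itself an elastic curve (a circle) in the $k_2=0$ sense, but with $k_2\neq0$ it would be a new example, so the constants must exclude it.
\end{itemize}

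The hard point is therefore computing $a$ and $c$ exactly, including the contribution of $\gamma_j^2$ terms and the precise coefficient of the mean curvature cubic. Everything hinges on showing $c\neq0$ and that case (iii) has no real solution. I would do this computation carefully, for instance by cross-checking against \eqref{eq-WH} with $H=-2\sqrt2 i f_1$ and $C_1=C_2=0$, where the potential coefficient $\tfrac{3}{4}(-\tfrac23)=-\tfrac12$ gives $a$ directly. For the converse, an elastic curve times a great circle satisfies both equations by the same computation.
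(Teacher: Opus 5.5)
Your plan is the paper's proof: substitute the product data into \eqref{eq-Willmore}, split into two ODEs, and separate variables. The paper's step ``$k_1\not\equiv0$ forces $k_2$ constant'' is exactly your differentiation in $u_2$, and your three cases are exhaustive. The argument does close, but the proposal defers the computation that carries the whole content, and several of your predictions about it are wrong. Here is what actually comes out.

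Since $\bar f_2=f_1$ and $\bar H=2\sqrt2\,i\bar f_1$, the cubic terms are of type $f_1\bar f_1^2$ and $f_1^3$, not $|f_1|^2f_1$ as you wrote. With $\kappa=k_1+ik_2$ they combine to $\tfrac1{32}\kappa(\kappa^2+\bar\kappa^2)=\tfrac1{16}\kappa(k_1^2-k_2^2)$, and this is what produces the mixed term. The $\gamma$-terms give $-\bar f_1$ in \eqref{eq-Willmore}, i.e.\ $+\bar H$ in \eqref{eq-WH}. This turns the potential coefficient $-\tfrac12$ into $+\tfrac12$, so $a$ does not come ``directly'' from $\tfrac34(-\tfrac23)$: that term alone gives $a=-1$, which is not the spherical elastica equation. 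The correct values are $a=1$ and $c=-\tfrac12$:
\[
2k_1''+k_1(k_1^2-k_2^2)+2k_1=0,\qquad 2k_2''+k_2(k_2^2-k_1^2)+2k_2=0 .
\]
So $c\neq0$, and cases (i) and (ii) give the free elastica equation $2k''+k^3+2k=0$, as you hoped.

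In case (iii), however, $\tfrac12+c=0$, so your step ``solve $k_1^2=-a/(\tfrac12+c)$ and check its sign'' is a division by zero. The case is excluded differently: adding $\tfrac12(k_1^2-k_2^2)+1=0$ and $\tfrac12(k_2^2-k_1^2)+1=0$ gives $2=0$. The paper reaches the same contradiction as $4k_1=0$, after extracting $k_1^2=2+c^2$ from the second equation. With these values filled in, your argument is complete, including the converse.
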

\begin{proof}
    Let $x=\big(x_1(u_1), x_2(u_2)\big)$ be a product surface in $\mathbb{S}^2\times\mathbb{S}^2$. 
    Substituting \eqref{eq-C1C2}  into the Willmore equation \eqref{eq-Willmore}, it follows that $x$ is Willmore if and only if  $$2({f_2})_{zz}+2(f_2)_{\bar{z}\bar{z}}-8{f_2}(f_2^2+\bar{f_2}^2)-\bar{f_2}=0.$$
    From \eqref{eq-f1f2H}, the above equation reduces to the following system of ordinary differential equations.
    \begin{align}
    2k_1''(u_1) + k_1(u_1) \left( k_1(u_1)^2 - k_2(u_2)^2 \right) + 2 k_1(u_1) &= 0, \label{eq-ode1} \\
    2 k_2''(u_2) +  k_2(u_2) \left(k_2(u_2)^2-k_1(u_1)^2\right) + 2k_2(u_2) &= 0. \label{eq-ode2}
\end{align}

If $k_1(u_1)=0$, then \eqref{eq-ode2} reduces to
$$2 k_2''(u_2) + k_2(u_2)^3+ 2k_2(u_2)=0,$$
the standard elastic equation. Therefore, $x_1(u_1)$ is a great circle and $x_2(u_2)$ is an elastic curve in $\mathbb{S}^2$. 

For the case of $k_1(u_1)\neq0$,  since $k_2(u_2)$ does not depend on the variable $u_1$, we deduce from \eqref{eq-ode1} that $k_2(u_2)=c$ is a constant. It follows from \eqref{eq-ode2} that $c$ satisfies 
$$c^3+\big(2-k_1(u_1)^2\big)c=0,$$
    which implies $c=0$ or $k_1(u_1)^2=2+c^2$. 
    We claim that the latter case is impossible. In fact, substituting $k_1(u_1)^2=2+c^2$ into \eqref{eq-ode1} yields a contradiction  
    $$4=2+c^2-c^2+2=0.$$
    Therefore, we derive that  $k_2(u_2)=0$ and $k_1(u_1)$ satisfies the standard elastic equation 
    $$2k_1''(u_1) + k_1(u_1)^3 + 2 k_1(u_1) = 0.$$
    Hence, $x_1(u_1)$ is an elastic curve in $\mathbb{S}^2$ and $x_2(u_2)$ is a great circle.   
\end{proof}
It follows immediately from \eqref{eq-C1C2} that every product surface in $\mathbb{S}^2\times\mathbb{S}^2$ is Lagrangian with respect to both complex structures $J_1$ and $J_2$. Conversely, Castro and Urbano \cite{Castro-Urbano} showed that any surface Lagrangian with respect to both \(J_1\) and \(J_2\) is necessarily a product surface. 
\begin{corollary}
If a Willmore surface  $x:\Sigma\rightarrow \mathbb{S}^2\times\mathbb{S}^2$ is Lagrangian for both complex structures $J_1$ and $J_2$, then it is the product of an elastic curve in $\mathbb{S}^2$ and a great circle. 
\end{corollary}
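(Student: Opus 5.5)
The corollary follows by combining the Castro--Urbano characterization with the theorem just proved. Let $x:\Sigma\rightarrow\mathbb{S}^2\times\mathbb{S}^2$ be Willmore and Lagrangian for both $J_1$ and $J_2$, so that $C_1=C_2\equiv 0$.

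\textbf{Step 1: reduction to a product.} By the result of Castro and Urbano \cite{Castro-Urbano}, $x$ is locally a product surface. Concretely, $C_1=C_2=0$ means that $J_1x_z$ and $J_2x_z$ are normal to the surface. Hence $x_u$ splits orthogonally into its two $\mathbb{S}^2$-components. One checks that both projections $x_1=\pi_1\circ x$ and $x_2=\pi_2\circ x$ have rank at most one, so each factor traces out a curve. Since $x$ is an immersion, each projection must have rank exactly one.

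\textbf{Step 2: choice of parameters.} On a neighborhood we may therefore write $x=(x_1(u_1),x_2(u_2))$ with $u_1,u_2$ arc-length parameters of the two curves. The local model is then exactly the one set up at the beginning of this section. In particular, the fundamental data are those in \eqref{eq-C1C2} and \eqref{eq-f1f2H}.

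\textbf{Step 3: apply the theorem.} We now invoke the theorem just proved. The Willmore equation for $x$ reduces to the system \eqref{eq-ode1}--\eqref{eq-ode2}. The case analysis carried out there shows that one of the curvatures vanishes identically and the other satisfies the elastic equation $2k''+k^3+2k=0$. Thus one factor is a great circle and the other is an elastic curve in $\mathbb{S}^2$.

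\textbf{Step 4: from local to global.} The only delicate point is passing from the local statement to the global one. Vanishing of $k_1$ or $k_2$ on an open set propagates to the whole connected surface, because the product decomposition is preserved by connectedness. Moreover, the product structure itself is global, which is part of the Castro--Urbano statement being invoked. Hence $x$ is globally the product of an elastic curve in $\mathbb{S}^2$ and a great circle.
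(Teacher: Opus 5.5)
Your Steps 1--3 follow the paper's argument. The paper simply combines the Castro--Urbano fact that a surface Lagrangian for both $J_1$ and $J_2$ is a product surface with the classification theorem for product surfaces. Your Step 1 even gives a direct proof of the local product structure: $x_1^*\omega=\tfrac12(C_1+C_2)\,\omega_\Sigma=0$ and $x_2^*\omega=\tfrac12(C_1-C_2)\,\omega_\Sigma=0$ force $\operatorname{rank}dx_j\le 1$, and transversality of $\ker dx_1$ and $\ker dx_2$ gives coordinates in which $x=(\alpha(s),\beta(t))$.

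The problem is Step 4, which the paper does not attempt and whose justification is wrong as written. For a general surface Lagrangian for both complex structures, the product structure is only local, so it cannot be taken from Castro--Urbano. Here is a counterexample. Take the connected, simply connected U-shaped domain $\big((0,3)\times(0,1)\big)\cup\big((0,1)\times(0,3)\big)\cup\big((2,3)\times(0,3)\big)$. Put $x=(\alpha(s),\beta_L(t))$ on the left arm and $x=(\alpha(s),\beta_R(t))$ on the rest, where $\beta_L,\beta_R$ are smooth unit-speed curves that coincide for $t\le 1$ and branch apart afterwards. This is a smooth immersion, Lagrangian for both $J_1$ and $J_2$, but it is not globally a product of two curves. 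For the same reason, ``connectedness'' by itself does not propagate the vanishing of $k_1$.

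What rescues Step 4 is the rigidity of the ODEs produced by the theorem. On each product rectangle $I\times J$, either $k_1\equiv0$ on $I$, or $k_2\equiv0$ on $J$ and $k_1$ is a nontrivial solution of $2k''+k^3+2k=0$. A nontrivial solution has isolated zeros, since $k=k'=0$ at one point forces $k\equiv0$. Let $\kappa_1^2$ be the squared geodesic curvature of the arc locally traced by $x_1$; this is a well-defined function on $\Sigma$. By the dichotomy, the set of points near which $\kappa_1\equiv0$ is both open and closed. Hence, on connected $\Sigma$, either $\kappa_1\equiv0$ everywhere or $\kappa_2\equiv0$ everywhere.

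Next, on overlapping product charts the local arcs agree on open intervals. A great circle is fixed by a point and a unit tangent, and an elastic curve by these together with $k$ and $k'$ at one point. Uniqueness for the Frenet equations plus a chain-of-charts argument then places all local arcs on a single great circle and a single maximal elastic curve. Alternatively, you can invoke the real-analyticity of Willmore surfaces. With this replacement, your proof is complete.
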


\textbf{Acknowledgement:}
This work was supported by NSFC No. 12171473 and 11831005, as well as the Fundamental Research Funds for the Central Universities. 
The second author was also supported by JSPS  KAKENHI Grant Number JP22K03304.

\noindent{Xiaoling Chai, Zhenxiao Xie}\\
\noindent{\small \em School of Mathematical Sciences, Beihang University, Beijing 102206, China.}\\
\noindent{\em Email: xlchai2000@163.com,~xiezhenxiao@buaa.edu.cn}\\

\noindent{Shimpei Kobayashi}\\
\noindent{\small \em Department of Mathematics, Hokkaido University, 
 Sapporo, 060-0810, Japan.}\\
\noindent{\em Email: shimpei@math.sci.hokudai.ac.jp}\\
 
 \noindent{Changping Wang}\\
\noindent{\small \em School of Mathematics and Statistics, FJKLMAA, Fujian Normal University, Fuzhou 350117, China.}\\
\noindent{\em Email: cpwang@fjnu.edu.cn}\\



\begin{thebibliography}{99}
\bibitem{Alexakis}
S. Alexakis, R. Mazzeo, {\em Renormalized area and properly embedded minimal surfaces in hyperbolic 3-manifolds}, Commun. Math. Phys. 297: 621-651 (2010).
\bibitem{WD}
D. Brander, P. Wang, {\em On the Bj\"orling problem for Willmore surfaces}, J. Differ. Geom. 108: 411-457 (2018). 

\bibitem{Bryant}
R. Bryant, {\em A duality theorem for Willmore surfaces}, J. Differ. Geom. 20: 23-53 (1984). 

\bibitem{Castro-Urbano} 
I. Castro, F. Urbano, {\em Minimal Lagrangian surfaces in $\mathbb{S}^2\times\mathbb{S}^2$}, Commun. Anal. Geom. 15: 217-248 (2007).

\bibitem{Chen}
B.Y. Chen, T. Nagano, 
{\em Totally geodesic submanifolds of symmetric spaces I}, Duke Math. J. 44: 745-755 (1997). 

\bibitem{Ejiri}
E. Ejiri, {\em Willmore surfaces with a duality in $\mathbb{S}^N(1)$}, Proc. London Math. Soc. 57: 383-416 (1988). 

\bibitem{Graham}
C.R. Graham, E. Witten, {\em Conformal anomaly of submanifold observables in AdS/CFT correspondence}, Nuclear Physics B, 546: 52-64 (1999).

\bibitem{Hu-Li}
Z.J. Hu, H.Z. Li, {\em Willmore submanifolds in a Riemannian manifold}, Contemporary geometry and related topics, 251-275 (2004). 

\bibitem{Hsiang-Lawson} 
W.-Y. Hsiang, H.B. Lawson, {\em Minimal submanifolds of low cohomogeneity}, J. Differ. Geom. 5: 1–38 (1971). 

\bibitem{Ikoma}
N. Ikoma, A. Malchiodi, A. Mondino, {\em Embedded area-constrained Willmore tori of small area in Riemannian three-manifolds I: minimization}, Proc. London Math. Soc. 115: 502-544 (2017). 

\bibitem{MWW}
X. Ma, C.P. Wang, P. Wang, {\em Classification of Willmore two-spheres in the 5-dimensional sphere}, J. Differ. Geom. 106: 245-281 (2017). 

\bibitem{Marques}
F.C. Marques, A. Neves, {\em Min-max theory and the Willmore conjecture}, Ann. of Math. 179: 683-782 (2014). 

\bibitem{Marques1}
F.C. Marques, A. Neves, {\em The Willmore Conjecture}, Jahresber. Dtsch. Math. Ver. 116: 201-222 (2014). 
\bibitem{Michelat}
A. Michelat, A. Mondino, {\em Quantization of the Willmore energy in Riemannian manifolds}, Adv. Math. 489: 110789 (2026).

\bibitem{Modino}
 A. Mondino, {\em The conformal Willmore functional: a perturbative approach}, J. Geom. Anal. 23: 764-811 (2013).
 
\bibitem{Mondino}
A. Mondino, T. Rivi\'ere, {\em Willmore spheres in compact Riemannian manifolds}, Adv. Math. 232: 608-676 (2013).

\bibitem{Montiel-Urbano} S. Montiel, F. Urbano, {\em A Willmore functional for compact surfaces in the complex projective plane},
J. reine angew. Math. 546: 139-154 (2002). 


\bibitem{Pedit}
F.J. Pedit, T.J. Willmore, {\em Conformal Geometry}, Atti Sem. Mat. Fis. Univ. Modena XXXVI: 237-245 (1988).

\bibitem{Urbano}
F. Torralbo, F. Urbano, {\em Minimal Surfaces in $\mathbb{S}^2\times\mathbb{S}^2$}, J. Geom. Anal. 25: 1132-1156 (2015).

\bibitem{Torralbo} F. Torralbo, F. Urbano,  {\em Surfaces with parallel mean curvature vector in $\mathbb{S}^2\times\mathbb{S}^2$ and $\mathbb{H}^2\times\mathbb{H}^2$}, Trans. Am. Math. Soc. 364: 785-813 (2012)


\bibitem{Wang-Xie}
C.P. Wang, Z.X. Xie, {\em Willmore surfaces in 4-dimensional conformal manifolds}, preprint, arXiv:2306.00846v2.

\bibitem{Willmore}
T.J. Willmore, {\em Note on embedded surfaces}, An. Sti. Univ. ”Al. I. Cuza” Iasi, 11: 493-496 (1965).
\end{thebibliography}
\end{document}